\newtheorem{theorem}{Theorem}[section]
\newtheorem{lemma}{Lemma}[section]
\newtheorem{corollary}[lemma]{Corollary}
\newtheorem{proposition}[lemma]{Proposition}
\newcommand{\R}{\mbox{$\Bbb R$}} \newcommand{\C}{\mbox{$\Bbb C$}}
\newcommand{\F}{\mbox{$\Bbb F$}} 
 \def\g{\mathfrak{g}} 
  \def\i{\mathfrak{i}}
\def\l{\mathfrak{l}}  \def\n{\mathfrak{n}}
 \def\r{\mathfrak{r}} \def\s{\mathfrak{s}}
\def\t{\mathfrak{t}}
\begin{document}

\title{Minimal matrix representations of five-dimensional Lie algebras}
\maketitle

\begin{center}

 {\bf Ryad Ghanam$^{1}$  G. Thompson$^2$}\\

$^1$ Department of Mathematics, University of Pittsburgh at Greensburg,\\ Greensburg, PA 15701, U.S.A.,\\ ghanam@pitt.edu.\\
$^1$Department of Mathematics, King Fahd University of Petroleum $\&$ Minerals,\\ Dhahran 31261, Saudi Arabia,\\ghanam@kfupm.edu.sa,\\
$^2$ Department of Mathematics, University of Toledo,\\ Toledo, OH
43606, U.S.A. \\  gerard.thompson@utoledo.edu \\

\end{center}

\maketitle

\begin{abstract}
We obtain minimal dimension matrix representations for each
indecomposable five-dimensional Lie algebra over $\R$ and justify in each case that they are minimal. In each case a matrix Lie group is given
whose matrix Lie algebra provides the required representation.
\end{abstract}

\vspace{4cm}

\hspace{1cm} Key words: Lie algebra, Lie group, minimal
representation.

\hspace{1cm}Mathematics Subject Classification: 17B, 53C.

\newpage

\section{Introduction} Given a real Lie algebra $\g$ of dimension $n$ a
well known theorem due to Ado asserts that $\g$ has a faithful
representation as a subalgebra of $gl(p,\R)$ for some $p$. The
theorem does not give much information about the value of $p$ but
leads one to believe that $p$ may be very large in relation to the
size of $n$ and consequently it seems to be of limited practical
value. 
We define the invariant $\mu(\g)$
to be the minimum value of $p$. A little care must be exercised
because there may well be inequivalent representations for which
this minimum value is attained. Of course if $\g$ has a trivial
center then the adjoint representation furnishes a faithful
representation of $\g$ and in the notation used above $\mu(\g)\leq
n$. Nonetheless many algebras have non-trivial centers, nilpotent
algebras for example, and then the adjoint representation is not
faithful. Even if the center is trivial it could well be the case that $\mu(\g)\ <
n$.

Of course it is interesting to ascertain the value of $\mu$ from a theoretical point of view. However, an important practical reason is that calculations involving symbolic programs such as Maple and Mathematica
use up lots of memory when storing matrices; accordingly, calculations are likely to be faster if one can represent matrix Lie algebras using matrices of a small size.

In two recent papers \cite{GT,KB}, the problem of finding such a
minimal representation is considered for the four-dimensional Lie
algebras. In \cite{KB}, the main technique is somewhat indirect and
depends on a construction known as a left symmetric structure. In
\cite{GT}, the minimal representation has been calculated directly
without the need for considering left symmetric structures.

In this paper, we consider all indecomposable five-dimensional Lie
algebras listed in \cite{PSWZ}. It forms part of a series in which we hope to find minimal dimensional representations for all the low-dimensional algebras. Partial classifications of Lie algebras are known up to dimension nine. The five-dimensional problem is of a completely magnitude from the four-dimensional case since there are, up to isomorphism, forty classes of algebra.
It has already been shown in \cite{GST} that
$\mu(\g)\leq 5$ for each five-dimensional Lie algebra in the list.
In this paper we obtain sharper results and for the cases where
$\mu(\g) < 5$ we give an explicit representation for the Lie
algebra. This representation is given by means of a matrix Lie group, denoted
by $S$, in
local coordinates $(w,x,y,z,q)$. We also explain for those cases where $\mu(\g)\ = 5$ why we cannot have $\mu(\g) < 5$.
The matrix $S$ can be modified by adding quadratic and higher order terms to the entries without affecting the matrix Lie algebra, obtained by differentiating and evaluating at
the identity. Accordingly, as a check, we supply the corresponding right-invariant vector fields. We also adopt the convention that when referring to an ``abstract" Lie
algebra five-dimensional we use $\{e_1,e_2,e_3,e_4,e_5\}$ as a basis but when we construct a matrix representation we use $\{E_1,E_2,E_3,E_4,E_5\}$ for the corresponding
generators.

An outline of the paper is as follows. In Section 2 we give a brief overview of the indecomposable five-dimensional Lie algebras.
In Section 3 we give a two results about representations that are needed in the sequel. We shall also have occasion to use a version of Lie's Theorem for Lie algebras
over $\R$. In Section 4 we determine the few algebras where $\mu=4$ as result of the representing matrices having some partial complex structure. In Section 5 we are able to determine
all algebras for which $\mu=3$. In Section 6 we consider algebras that have a four-dimensional abelian nilradical and in Section 7 the six-dimensional nilpotent algebras. In Section 8 
we give a group matrix $S$ for each of the five-dimensional Lie algebras. In fact for the convenience of the reader we provide also an $S$-matrix for all indecomposable Lie algebras of dimension \emph{and less}. We are able to reduce the problem to the consideration of five difficult cases, namely, $A_{5,21}, A_{5,22}, A_{5,23(b\neq1)}, A_{5,31}, A_{5,38}$. These five cases are relegated to an appendix which consists of a lot of technical details. For the casual reader Section 8 is likely to be of most interest. Nonetheless, for the sake of the integrity of the results, we feel that it is essential to supply these details so that they can be verified independently. Following \cite{PSWZ} we denote each of the  five-dimensional algebras as $A_{5,k}$ where $1\leq k \leq 40$.\\

We close this Introduction by stating several conclusions. First of all one might imagine that many algebras could have representations with $\mu=4$ where the representing
matrices having some partial complex structure as necessitated by applying Lie's Theorem for Lie algebras
over $\R$ rather than over $\C$. However, relatively few such representations actually occur. Secondly, and despite the first remark, for many of the five-dimensional
indecomposable Lie algebras we do have $\mu=4$. Some of the algebras for which $\mu=5$ are $CR$-Lie algebras and over $\C$ we would have $\mu=4$, for example $A_{5,25}$
and $A_{5,26}$ which are equivalent over $\C$ to  $A_{5,19}$
and $A_{5,20}$, respectively. Also, there are some classes of algebra
depending on parameters where $\mu(\g) < 5$ \emph{but only for certain values of the parameters}. In fact, as a crude count, $21$ of the $40$ algebras have $\mu=5$, although
$\mu(\g) < 5$ for several of those 21 algebras for special values of the parameters.

R. Ghanam would like to acknowledge the support of King Fahd
University of Petroleum and Minerals through the funded project
number FT101010. In this paper the many calculations were performed with the help of the symbolic manipulation program Maple.


\section{Real and complex five-dimensional Lie algebras}

The \emph{real} five-dimensional indecomposable  Lie algebras were
classified by G Mubarakzyanov \cite{Mub1}. They can be found easily
in \cite{PSWZ} and we list them in Section 4. The first six algebras
are nilpotent. These six algebras are distinguished by their index
of nilpotence and the dimension of the derived algebra except for
the filiforms $A_{5,2}$ and $A_{5.6}$. However, $A_{5.2}$ has a
codimension one abelian ideal, whereas $A_{5,6}$ does not so the six
algebras are mutually non-isomorphic.

We remark that the algebras $A_{5,7}-A_{5,18}$ have a
four-dimensional abelian nilradical; $A_{5,19}-A_{5,29}$ have a
four-dimensional non-abelian nilradical isomorphic to
$H\bigoplus\R$, where $H$ denotes the three-dimensional Heisenberg
algebra; $A_{5,30}-A_{5,32}$ have a nilradical that is isomorphic to
the unique four-dimensional indecomposable nilpotent algebra;
$A_{5,33}-A_{5,35},A_{5,38},A_{5,39}$ have an abelian
three-dimensional nilradical and $A_{5,36},A_{5,37}$ have a
three-dimensional nilradical that is isomorphic to $H$; $A_{5,40}$
is the only algebra that is not solvable, that is, has a non-trivial
Levi decomposition, being a semi-direct product of $\s\l(2,\R)$ and
the abelian algebra $\R^2$. In particular, all of these algebras,
with the exception of $A_{5,36},A_{5,37}$ and $A_{5,40}$ have a
three-dimensional abelian subalgebra.

Given a Lie algebra $\g$ we denote its derived algebra by $[\g,\g]$.
If now $\g$ is an indecomposable five-dimensional Lie algebra the
dimension of $[\g,\g]$ is precisely one only for the Heisenberg
algebra $A_{5,4}$ and is two in just $A_{5,1}$ and $A_{5,5}$ which
are both nilpotent. \emph{Apart from the non-solvable $A_{5,40}$,
for all other indecomposable five-dimensional indecomposable
algebras the dimension of $[\g,\g]$ is either three or four}. In
fact in the following algebras the dimension of $[\g,\g]$ is three:
$A_{5,2},A_{5,3},A_{5,6},A_{5,8},A_{5,10},A_{5,14},A_{5,15(a=0)},A_{5,19(a=1)},A_{5,20(a=0,1)},A_{5,22},A_{5,26(p=0)},A_{5,27},A_{5,28(a=1)},\\A_{5,29},A_{5,30(a=1)},A_{5,32}-A_{5,39}$.

\section{Two representation results}

\subsection{Transposition about the anti-diagonal.}

\begin{proposition} Suppose that the Lie algebra $\g$ has a
representation as a subalgebra of $\g\l(p,\R)$. Suppose that
$L:\g\l(p,\R) \rightarrow \g\l(p,\R)$ is a (linear) involution, that
is, has period two. Then mapping a representing matrix $M$ to
$-LM^tL$ gives a second inequivalent representation of $\g$.
\end{proposition}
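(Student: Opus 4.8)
The plan is to verify three things in turn: that $\phi(M):=-LM^tL$ defines a Lie algebra homomorphism, that it is faithful, and that the resulting representation is genuinely new, i.e. inequivalent to the original. Throughout I read $L$ as an invertible matrix with $L^2=I$ (the period-two hypothesis), so that the associated involution $M\mapsto LML$ of $\g\l(p,\R)$ is the one implicit in the statement, and I use repeatedly that transposition is an anti-automorphism, $(MN)^t=N^tM^t$. I write $\rho:\g\hookrightarrow\g\l(p,\R)$ for the given embedding.

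First I would check the homomorphism property directly. Because $L^2=I$, the two inner copies of $L$ collapse: $\phi(M)\phi(N)=(-LM^tL)(-LN^tL)=LM^tN^tL$, and likewise $\phi(N)\phi(M)=LN^tM^tL$, so $[\phi(M),\phi(N)]=L(M^tN^t-N^tM^t)L$. On the other hand $\phi([M,N])=-L(MN-NM)^tL=L(M^tN^t-N^tM^t)L$, the same expression. Hence $\phi$ respects brackets, and since linearity is immediate from linearity of transposition and of left/right multiplication by $L$, the map $\phi$ is a Lie algebra homomorphism on all of $\g\l(p,\R)$; in particular its restriction along $\rho$ is a representation of $\g$.

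Next I would dispose of faithfulness. The map $\phi$ is a composition of the bijections $M\mapsto M^t$, $M\mapsto LML$ (which is its own inverse, as $L^{-1}=L$), and $M\mapsto -M$, so $\phi$ is a linear automorphism of $\g\l(p,\R)$; as $\rho$ is faithful and $\phi$ injective, $\phi\circ\rho$ is faithful. The useful observation for the last step is that, again because $L^{-1}=L$, one may rewrite $\phi(M)=L(-M^t)L^{-1}$. Thus along $\rho$ the new representation $X\mapsto\phi(\rho(X))$ is conjugate, by the fixed matrix $L$, to the contragredient representation $X\mapsto-\rho(X)^t$, so up to equivalence $\phi\circ\rho$ is simply the dual $\rho^{*}$.

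The main obstacle is the assertion of \emph{inequivalence}, and the reduction above is what makes it tractable: $\phi\circ\rho$ is equivalent to $\rho$ exactly when $\rho$ is self-dual, $\rho\cong\rho^{*}$. To rule this out I would use a conjugation invariant, namely the spectrum. For any $X$ the eigenvalues of $-\rho(X)^t$ are precisely the negatives of the eigenvalues of $\rho(X)$, since transposition preserves the spectrum while the sign negates it. Consequently, if there is an element $X\in\g$ for which the eigenvalue multiset of $\rho(X)$ is \emph{not} invariant under $\lambda\mapsto-\lambda$ — for instance any $X$ with nonzero trace — then $\rho(X)$ and $-\rho(X)^t$ have different spectra, so no invertible $P$ satisfies $P\rho(X)P^{-1}=-\rho(X)^t$; since $\phi\circ\rho$ is conjugate to $X\mapsto-\rho(X)^t$, the two representations are inequivalent. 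For the solvable algebras treated here such an $X$ is supplied by the diagonal part of the representing matrices, so the inequivalence holds in every case where the construction is applied, the only caveat being a self-dual $\rho$, where the procedure merely reproduces an equivalent representation.
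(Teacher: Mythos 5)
Your verification of the bracket identity is the same computation the paper gives: using $L^2=I$ to collapse $LM^tLLN^tL$ to $LM^tN^tL$ and recognizing $L[N,M]^tL=\phi([M,N])$. That is the entirety of the paper's proof, so for that portion you and the authors agree. Where you genuinely diverge is in taking the words ``second inequivalent representation'' at face value and trying to prove them. Your reduction is the right conceptual frame: since $L=L^{-1}$, the map $\phi\circ\rho$ is conjugate by $L$ to the contragredient $X\mapsto-\rho(X)^t$, i.e.\ to the dual $\rho^*$, so the new representation is equivalent to the old one precisely when $\rho$ is self-dual, and a single element whose spectrum is not symmetric under $\lambda\mapsto-\lambda$ certifies inequivalence. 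This is more than the paper establishes -- and, as your own caveat about self-dual $\rho$ shows, more than is literally true in general: for a nil-representation every $\rho(X)$ can be nilpotent, the spectra are all $\{0\}$, and your criterion gives nothing. The paper quietly concedes this in the corollary that follows, glossing ``inequivalent'' as ``not necessarily equivalent''; the proposition is really only asserting that $\phi\circ\rho$ is another faithful representation, possibly new. So your proof buys an honest sufficient condition for genuine inequivalence and a clean identification of the construction as conjugated duality, at the cost of having to flag (correctly) that the proposition as literally stated is false for self-dual representations; the paper's proof buys brevity by proving only the homomorphism property and leaving both faithfulness (trivial, as you note, since $\phi$ is a linear bijection) and the equivalence question unaddressed.
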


\begin{proof} Given $M \in \g\l(p,\R)$ a representing matrix for
$\g$, map it to $\phi(M)=-LM^tL$. For a second such matrix $N$ we
have
$[\phi(M),\phi(N)]=[-LM^tL,-LN^tL]=[LM^tLLN^tL-LN^tLLM^tL]=[LM^tN^tL-LN^tM^tL]=L[N,M]^tL=\phi([M,N])$.
\end{proof}

\begin{corollary} Suppose that the Lie algebra $\g$ has a
representation as a subalgebra of $\g\l(p,\R)$. Then transposing the
representing matrices about the anti-diagonal (and taking negatives)
gives a second inequivalent (that is not necessarily equivalent)
representation of $\g$. \end{corollary}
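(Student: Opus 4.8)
The plan is to derive the Corollary as a special case of the Proposition by exhibiting a suitable involution $L$. The statement about ``transposing about the anti-diagonal'' is simply the map $M \mapsto -LM^tL$ for a particular choice of $L$, so the work reduces to identifying that $L$ and verifying it is an involution in the sense required by the Proposition.

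First I would take $L=K$ to be the $p\times p$ exchange (anti-diagonal) matrix, the permutation matrix with ones along the anti-diagonal and zeros elsewhere, i.e.\ $K_{ij}=1$ if $i+j=p+1$ and $0$ otherwise. The key elementary facts are that $K^t=K$ and $K^2=I$, so that the conjugation map $L:\g\l(p,\R)\to\g\l(p,\R)$ defined by $L(M)=KMK$ is linear and has period two; hence it is an involution of the required type. Next I would check that conjugation by $K$ is exactly reflection about the anti-diagonal: for a matrix $M$ the product $KM^tK$ has $(i,j)$ entry equal to $M_{p+1-j,\,p+1-i}$, which is precisely the entry of $M^t$ read in the anti-transposed position, so $-KM^tK$ is the negative anti-transpose of $M$. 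This pins down the geometric description in the statement.

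With the involution $L(M)=KMK$ in hand, I would simply invoke the Proposition: the map $\phi(M)=-KM^tK$ sends a representation of $\g$ inside $\g\l(p,\R)$ to another representation, since the bracket computation in the proof of the Proposition gives $[\phi(M),\phi(N)]=\phi([M,N])$ verbatim. Thus the anti-transpose-and-negate operation is a Lie algebra homomorphism on the representing matrices, yielding a second representation of $\g$.

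The only genuine content beyond bookkeeping is the claim that the new representation is \emph{not necessarily} equivalent to the original; this is the point I expect to be the main obstacle, and I would handle it by noting that the word ``inequivalent'' here means simply ``not guaranteed to be equivalent,'' so no positive proof of inequivalence is needed in general. To justify that the two can genuinely differ, it suffices to remark that equivalence would require a fixed $P\in GL(p,\R)$ with $PMP^{-1}=-KM^tK$ for all representing $M$ simultaneously, and for a generic faithful representation no such single intertwiner exists; a concrete witness can be supplied later when a specific algebra is represented. Since the Corollary only asserts the existence of a second, possibly inequivalent, representation, the argument is complete once the Proposition is applied to $L(M)=KMK$.
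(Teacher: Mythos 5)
Your proposal is correct and follows exactly the route the paper takes: choose $L$ to be the anti-diagonal exchange matrix $K$ (so $K^t=K$, $K^2=I$) and apply the Proposition to the map $M\mapsto -KM^tK$, which is precisely negation composed with transposition about the anti-diagonal. The extra verification that conjugation by $K$ realizes the anti-transpose, and the remark that ``inequivalent'' only means ``not necessarily equivalent,'' are sensible elaborations but do not change the argument.
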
 \begin{proof} Take for $L$
in the Proposition the matrix whose only non-zero entries are $1$'s
down the anti-diagonal. Then the map $\phi$ consists of taking a
negative and transposing about the anti-diagonal. \end{proof}
\noindent We use the preceding Corollary in the sequel to reduce
some of the subcases that are needed to investigate whether a
particular type of representation exists.

\subsection{}

The following Theorem \cite{RT} explains how to find a
low-dimensional representation for a solvable algebra that has an
abelian nilradical with abelian complement.

\noindent \begin{theorem} \label{ab_rep_thm} Suppose that the
$n$-dimensional Lie algebra $\g$ has a basis $\{e_1,e_2,...,e_n\}$
and only the following non-zero brackets: $[e_a,e_i]=C^j_{ai}e_j$,
where $(1 \leq i,j \leq r$, $r+1 \leq a,b,c \leq n)$.  Suppose that
$\g$ has an abelian nilradical for which a basis is
$\{e_1,e_2,...,e_r\}$ and $\{e_{r+1},...,e_n\}$ is a basis for an
abelian subalgebra complementary to the nilradical. Then $\g$ has a
faithful representation as a subalgebra of $gl(r+1, \Bbb R)$.
\end{theorem}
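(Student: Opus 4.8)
The plan is to realize $\g$ as a semidirect product $\mathfrak{n}\rtimes\mathfrak{a}$, where $\mathfrak{n}=\langle e_1,\dots,e_r\rangle$ is the abelian nilradical and $\mathfrak{a}=\langle e_{r+1},\dots,e_n\rangle$ is the abelian complement, and then to write down an explicit block representation on $\R^{r+1}$. For each $a$ with $r+1\le a\le n$ I would introduce the $r\times r$ matrix $A_a$ whose $(j,i)$ entry is $C^j_{ai}$, so that $A_a$ encodes the action of $e_a$ on $\mathfrak{n}$. The first observation is that these matrices commute: applying the Jacobi identity to $e_a,e_b,e_i$ and using $[e_a,e_b]=0$ gives $[e_a,[e_b,e_i]]=[e_b,[e_a,e_i]]$, which in terms of the matrices reads $A_aA_b=A_bA_a$.

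Next I would define the representation $\rho:\g\to\g\l(r+1,\R)$ in block form, writing a general $(r+1)\times(r+1)$ matrix as $\left(\begin{smallmatrix} B & u\\ 0&0\end{smallmatrix}\right)$ with $B$ of size $r\times r$ and $u$ a column of height $r$. Set $\rho(e_a)=\left(\begin{smallmatrix} A_a&0\\0&0\end{smallmatrix}\right)$ for the complement generators and $\rho(e_i)=\left(\begin{smallmatrix} 0&v_i\\0&0\end{smallmatrix}\right)$ for the nilradical generators, where $v_i$ is the $i$-th standard basis column. A direct block multiplication then checks the three types of bracket: $\rho(e_i)$ and $\rho(e_j)$ multiply to zero, so $[\rho(e_i),\rho(e_j)]=0$; $[\rho(e_a),\rho(e_b)]$ reduces to $[A_a,A_b]=0$ by the commuting property just established; and $[\rho(e_a),\rho(e_i)]$ equals $\left(\begin{smallmatrix}0&A_av_i\\0&0\end{smallmatrix}\right)$, whose top column $A_av_i=\sum_j C^j_{ai}v_j$ reproduces exactly $\rho(C^j_{ai}e_j)$. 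Hence $\rho$ is a Lie algebra homomorphism matching all the structure constants.

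The main point requiring genuine argument is faithfulness. An element $\sum_i\alpha_ie_i+\sum_a\beta_ae_a$ maps to the block matrix with top-right column $\sum_i\alpha_iv_i$ and top-left block $\sum_a\beta_aA_a$. The $v_i$ are linearly independent, so the vanishing of the column forces every $\alpha_i=0$; the delicate step is to deduce that $\sum_a\beta_aA_a=0$ forces every $\beta_a=0$, that is, that the matrices $A_a$ are linearly independent. I would argue that this is precisely where the hypothesis that $\mathfrak{n}$ is \emph{exactly} the nilradical is used: if $e:=\sum_a\beta_ae_a$ gave a nontrivial relation with $\sum_a\beta_aA_a=0$, then $[e,e_i]=0$ for all $i$ (the action matrix vanishes) and $[e,e_b]=0$ since $\mathfrak{a}$ is abelian, so $e$ would be a nonzero central element. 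Then $\R e$ is an abelian, hence nilpotent, ideal and must lie in the nilradical $\mathfrak{n}$; but $e\in\mathfrak{a}$ and $\mathfrak{a}\cap\mathfrak{n}=0$, a contradiction. Thus the $A_a$ are independent, $\rho$ is injective, and $\g$ embeds faithfully in $\g\l(r+1,\R)$. I expect the bracket verification to be routine block bookkeeping, with the faithfulness argument --- pinning down the linear independence of the $A_a$ via the maximality of the nilradical --- being the real obstacle.
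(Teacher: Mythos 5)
Your construction is correct and complete. The paper itself does not prove this theorem --- it quotes it from the reference [RT] --- but the representation you build (nilradical generators placed in the last column, complement generators acting through their adjoint matrices $A_a$ in the upper-left $r\times r$ block) is exactly the standard affine-type construction that the cited source uses, and your verification of the brackets is the right routine computation. The one step that genuinely needs care, faithfulness, you handle correctly: a relation $\sum_a\beta_aA_a=0$ would make $e=\sum_a\beta_ae_a$ central, hence $\R e$ would be a nilpotent ideal contained in the nilradical, contradicting that $e$ lies in the complement; this is precisely where the hypothesis that $\{e_1,\dots,e_r\}$ spans the \emph{whole} nilradical (and not merely some abelian ideal) is used.
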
 \noindent As result of Theorem 3.1 we can find
representations for algebras $A_{5,33,34,35}$ in $\g\l(4,\R)$.

\section{Extension of Lie's Theorem over $\R$.}

In this paper we are concerned with \emph{real} Lie algebras. Lie's
Theorem is usually stated for complex Lie algebras; in adapting
Lie's Theorem to $\R$ one has to allow for $2\times 2$ complex
blocks along the diagonal. As such, if a solvable Lie algebra has a
representation in $\g\l(4,\R)$ the matrices may be assumed to be one
of one of the following five forms:
\begin{eqnarray} \label{Lie}
 & (i)  \, \left[\begin{smallmatrix}
a&b&c&d\\ -b&a&e&f\\ 0&0&g&h\\ 0&0&-h&g \end{smallmatrix}\right] \,
(ii)  \, \left[\begin{smallmatrix} a&b&c&d\\ -b&a&e&f\\ 0&0&g&h\\
0&0&0&i \end{smallmatrix}\right]  \, (iii)  \,
\left[\begin{smallmatrix} a&b&c&d\\ 0&e&f&g\\ 0&0&h&i\\ 0&0&-i&h
\end{smallmatrix}\right]  \, (iv)  \, \left[\begin{smallmatrix}
a&b&c&d\\ 0&e&f&g\\ 0&-f&e&h\\ 0&0&0&i \end{smallmatrix}\right]  \,
(v)  \, \left[\begin{smallmatrix} a&b&c&d\\ 0&e&f&g\\ 0&0&h&i\\
0&0&0&j \end{smallmatrix}\right]. \end{eqnarray} We can exclude the
third case in view of Corollary 3.2. In order to simplify a given
representation it makes sense to use only a \emph{non-singular}
matrix of the same type as one of these five. It appears as though
we have to consider many cases for representing five-dimensional
algebras in $\g\l(4,\R)$. However, it is remarkable that cases
(i-iv) correspond to representations for only a very limited number
of and, in some cases, unexpected algebras. In the interests of
concision we shall be content merely to sketch the results.

Notice first of all that it follows from Section 3.3 that if a
five-dimensional algebra $\g$ has a representation of type (i)-(iv)
then at least one of its adjoint matrices must have non-real
eigenvalues. Actually this remark is not obvious but is nonetheless
true as we shall explain. In any case given the last remark, we can
proceed as follows. First of all, the nilpotent algebras
$A_{5,1}-A_{5,6}$ cannot have representations of types (i)-(iv).
Secondly, in Section 6 we give a systematic analysis of algebras
$A_{5,7}-A_{5,18}$ so we leave them to one side for the moment.
Thirdly, of algebras $A_{5,19}-A_{5,39}$ only
$A_{5,25},A_{5,26},A_{5,35},A_{5,37}$ and $A_{5,39}$ have adjoint
matrices with non-real eigenvalues. As regards $A_{5,35}$ we know
that $\mu=4$ in view of Theorem 3.1. Similarly we will show that
$\mu=4$ for $A_{5,37}$ in Section 5. As for $A_{5,39}$ it is
equivalent as a complex algebra to $A_{5,38}$ and the latter will be
be shown to have $\mu=5$ so that $\mu=5$ for $A_{5,39}$ also. Hence
of algebras $A_{5,19}-A_{5,39}$ whose adjoint matrices have non-real
eigenvalues, $\mu$ is in doubt only for $A_{5,25}$ and $A_{5,26}$.

\begin{proposition}
For $A_{5,25}$ and $A_{5,26}$ we have $\mu=5$.
\end{proposition}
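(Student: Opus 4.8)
The plan is to show $\mu \geq 5$ for both algebras (the upper bound $\mu \le 5$ being already established in \cite{GST}), by proving that neither can be faithfully represented in $\g\l(4,\R)$. First I would recall the structure of $A_{5,25}$ and $A_{5,26}$: both have a four-dimensional nilradical isomorphic to $H\oplus\R$, where $H$ is the three-dimensional Heisenberg algebra, and both have adjoint matrices with non-real eigenvalues (this is precisely why they survived the earlier elimination). The non-real eigenvalues force any hypothetical four-dimensional representation to exhibit a genuine $2\times 2$ complex block, so by the extension of Lie's Theorem over $\R$ stated in Section 4, the representing matrices must take one of the forms (i), (ii), or (iv) of display~(\ref{Lie}); form (iii) is already excluded by Corollary~3.2, and the fully upper-triangular form (v) cannot produce non-real eigenvalues.

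The core of the argument is then a case analysis across these three admissible forms. For each form I would write down a general element as a linear combination $x_1E_1+\cdots+x_5E_5$ of five matrices of that shape, impose the defining bracket relations of the algebra (the structure constants given in \cite{PSWZ}), and attempt to solve for the matrix entries. The key structural obstruction I expect to exploit is the center: both $A_{5,25}$ and $A_{5,26}$ contain a one-dimensional center (the Heisenberg center persists), and a faithful representation must represent the central generator by a nonzero matrix commuting with everything, yet lying in the derived algebra of the represented Lie algebra. In a $4\times 4$ block-triangular realization the only matrices commuting with a full complex block are themselves scalar-plus-complex-block on that factor, which severely constrains where a nonzero nilpotent central element can live. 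I would track how the non-real eigenvalue block interacts with the Heisenberg relation $[E_1,E_2]=E_5$ (the central generator) and show that the bracket of two matrices of the prescribed form can never produce the required central matrix while simultaneously satisfying the remaining relations.

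The main obstacle, and where the real work lies, will be organizing the case analysis so that it is genuinely exhaustive rather than merely illustrative: within each of forms (i), (ii), (iv) there are sub-cases depending on which generator carries the complex block and how the nilradical is placed, and one must rule out every placement. I anticipate that the anti-diagonal transposition of Corollary~3.2 will cut this labor roughly in half by pairing each candidate configuration with its transpose, so that an obstruction found for one automatically disposes of the other. The decisive contradiction in each surviving sub-case should come from comparing ranks or traces: the complex block contributes a traceless rotational part, and matching this against the trace conditions forced by the specific structure constants of $A_{5,25}$ and $A_{5,26}$ (which differ only in one parameter) yields an inconsistency. Once every sub-case is eliminated, no faithful $4\times 4$ representation exists, so $\mu\geq 5$, and combined with the known upper bound we conclude $\mu=5$ for both algebras.
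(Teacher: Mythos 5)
Your reduction to the block forms (i), (ii), (iv) of display (\ref{Lie}) -- discarding (iii) by Corollary 3.2 and (v) because it forces real adjoint spectra -- is exactly the paper's starting point. But the two mechanisms you propose for deriving the contradiction in the surviving cases do not work, and the step that actually closes the argument is missing. First, the central-element obstruction rests on a false premise: $A_{5,25}^{bp}$ and $A_{5,26}^{\epsilon p}$ have \emph{trivial} center whenever $p\neq 0$, because $[e_1,e_5]=2pe_1$ prevents the Heisenberg center $e_1$ from being central in the full algebra (and $e_5$, which you call ``the central generator'' via a relation $[E_1,E_2]=E_5$ that matches none of the structure constants, is the one generator acting nontrivially on everything). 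So there is no nonzero matrix that must commute with the whole representation, and the constraint you hoped to exploit evaporates. Second, the appeal to ``ranks or traces'' of the rotational block is too vague to exclude anything: the trace of a type (ii) or type (iv) matrix is unconstrained by the bracket relations, and rank considerations alone do not distinguish $A_{5,25}$ from algebras such as $A_{5,17}$ and $A_{5,18}$ that genuinely \emph{do} admit type (ii) representations.

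The obstruction the paper actually uses is an eigenvalue count on the adjoint of the non-nilpotent generator. In case (ii) the derived algebra is forced into a fixed nilpotent shape isomorphic to $A_{5,1}$ (in case (iv), to $A_{5,4}$), and adjoining one non-nilpotent matrix $E$ of that type produces a six-dimensional solvable algebra in which $ad\,E$ has spectrum $\{0,\,g-i,\,a-i\pm b\sqrt{-1},\,a-g\pm b\sqrt{-1}\}$: \emph{two} distinct pairs of non-real conjugate eigenvalues whenever $b\neq 0$. Any five-dimensional subalgebra inherits a subset of these containing $0$, hence still carries two non-real pairs; but $ad\,e_5$ for $A_{5,25}$ and $A_{5,26}$ has at most one such pair ($-p\pm\sqrt{-1}$, the remaining eigenvalues $0,-2p,-b$ or $0,-2p,-2p$ being real). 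That numerical mismatch is the contradiction, and it is what your sketch needs but does not supply. A separate sub-case, where two independent non-nilpotent generators of type (ii) or (iv) are present, must also be handled (it is only relevant for $A_{5,26}$ with $p=0$, where $[\g,\g]$ is three-dimensional) by showing the eigenvalue matching forces a nonzero linear combination of the two to be nilpotent. Finally, case (i) is not disposed of by your trace heuristic either; there the derived algebra of the representation is automatically abelian, which contradicts $[e_2,e_3]=e_1$ with $e_1,e_2,e_3\in[\g,\g]$.
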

\begin{proof}
Assuming that $\mu=4$ we shall obtain a contradiction for each of cases (i-v).
First of all case (v) is excluded because all of the adjoint matrices would have real eigenvalues which is not true for $A_{5,25}$ and $A_{5,26}$. As regards case (i) for a Lie algebra $\g$, the derived algebra $[\g,\g]$ must consist of matrices of the form
$\left[\begin{smallmatrix} 0&0&*&*\\ 0&0&*&*\\ 0&0&0&0\\ 0&0&0&0
\end{smallmatrix}\right]$ where only the asterisks can be non-zero.
It must be abelian and hence $\g$ cannot be $A_{5,25}$ or $A_{5,26}$. Case (iii) can be excluded in view of Proposition 3.1 once we have ruled out case (ii) which we look at next. The derived
algebra must consist of matrices of the form $\left[\begin{smallmatrix} 0&0&*&*\\ 0&0&*&*\\
0&0&0&*\\ 0&0&0&0 \end{smallmatrix}\right]$ where only the asterisks
can be non-zero. Let us suppose first of all that we have one
non-nilpotent matrix $E$ of type (ii). Then we obtain a
six-dimensional solvable algebra that has the nilpotent algebra
$A_{5,1}$ as its nilradical and $E$ spans a complement to it. In this six-dimensional algebra we find that
$ad\, E$ has eigenvalues $\{0, g-i, a-i \pm b \sqrt{-1}, a-g \pm b\
\sqrt{-1}\}$. The restriction of $ad\, E$ to the five-dimensional
subalgebra must have eigenvalues that are a subset of these six and
of course $0$ must be one of them. The only possibility is to remove $g-i$
which leaves the eigenvalues $\{0, a-i \pm b \sqrt{-1}, a-g \pm b\
\sqrt{-1}\}$ and we can only have $A_{5,17prs}$ or $A_{5,18}$. In
fact $A_{5,17 prs(s=1)}$ and $A_{5,18}$ do occur and examples of
each are listed in Section 10. However, $A_{5,25}$ and $A_{5,26}$ are excluded because their ad-matrices have at most a pair of complex eigenvalues and not two pairs.

If there are two non-nilpotent matrices of type (ii), $E_6$ and $E_7$ say, then $[\g,\g]$
is three-dimensional which as regards $A_{5,25}$ and $A_{5,26}$
would only allow $A_{5,26},p=0$. We shall obtain together with the representation of $A_{5,1}$ a
seven-dimensional codimension two nilradical solvable algebra and
the question now is whether there exists a five-dimensional
subalgebra besides $A_{5,1}$. Such an algebra $\g$ must have
$[\g,\g]$ of dimension three. Again letting $E_6$ be the given
matrix of type (ii) in \ref{Lie} we find that the eigenvalues of
$ad\, E$ are $\{0,0, i-g, a-g \pm b \sqrt{-1}, a-i \pm b\
\sqrt{-1}\}$. Now comparing with the adjoint matrices of $A_{5,26},p=0$ we deduce that $a=g=i,b=1$. In fact   arbitrary multiples of the identity can be added to $E_6$ and $E_7$ without affecting the putative representation that we are looking for. However the same argument that applies to $E_6$ applies equally to $E_7$ which would imply that a linear combination of $E_6$ and $E_7$ is nilpotent, a contradiction.

Case (iv) is similar to case (ii). Now $[\g,\g]$ consist of matrices of the form
$\left[\begin{smallmatrix} 0&*&*&*\\ 0&0&0&*\\ 0&0&0&*\\ 0&0&0&0
\end{smallmatrix}\right]$ where only the asterisks can be non-zero.
Let us suppose first of all that we have one non-nilpotent matrix
$E$  of type (iv). Then we obtain a six-dimensional solvable algebra
that has the nilpotent algebra $A_{5,4}$ as its nilradical and $E$
spans a complement to it. However, in the latter algebra we find
that $ad\, E$ has eigenvalues $\{0, a-i, a-e \pm f \sqrt{-1}, e-i \pm
f\ \sqrt{-1}\}$. The restriction of $ad\,E$ to $\g$
must have eigenvalues that are a
subset of these six and again we find two pairs of non-real complex conjugates.

If there are two non-nilpotent matrices $E_6$ and $E_7$ of type (iv) we shall obtain
together with the representation of $A_{5,4}$ a seven-dimensional
codimension two nilradical solvable algebra. Such an algebra $\g$ must have $[\g,\g]$ of dimension
three. Once again we find that, matching adjoint matrices to $A_{5,26},p=0$, a linear combination of $E_6$ and $E_7$ is nilpotent, a contradiction.
\end{proof}

\section{Representations in $\g\l(3,\R)$}

Let $\g$ be an indecomposable real solvable five-dimensional
algebra. Clearly $\g$ cannot have a representation as a subalgebra
of $\g\l(2,\R)$ so the smallest value of $n$ for which $\g$ can be
represented in $\g\l(n,\R)$ is $n=3$.

\begin{lemma} Any three-dimensional abelian subalgebra of
$\g\l(3,\R)$ contains a multiple of the identity.
\label{first_lemma} \end{lemma}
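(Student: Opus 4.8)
The plan is to prove the stronger statement that $\mathfrak{a}$ actually contains the identity matrix $I$ itself; since $I$ is a nonzero multiple of the identity, this is exactly what is required. The main tool is the centralizer $Z(A)=\{B\in\g\l(3,\R):AB=BA\}$ of an element $A\in\mathfrak{a}$. Because $\mathfrak{a}$ is abelian, $\mathfrak{a}\subseteq Z(A)$ for every $A\in\mathfrak{a}$. For a $3\times 3$ matrix one has $\dim Z(A)\geq 3$, with equality precisely when $A$ is \emph{cyclic} (nonderogatory), in which case $Z(A)$ coincides with the commutative algebra $\R[A]=\mathrm{span}\{I,A,A^2\}$. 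This gives the first and easy case: if $\mathfrak{a}$ contains a cyclic matrix $A$, then $\mathfrak{a}\subseteq Z(A)$ and the dimension count $\dim\mathfrak{a}=3=\dim Z(A)$ forces $\mathfrak{a}=Z(A)=\R[A]$, which contains $I$. It therefore remains to treat the case in which every element of $\mathfrak{a}$ is derogatory, i.e. has minimal polynomial of degree at most two.

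In that remaining case I would complexify and simultaneously triangularize the commuting family (Lie's Theorem over $\C$; cf. Section 4), so that all elements of $\mathfrak{a}_\C$ become upper triangular with diagonal entries given by three $\C$-linear eigenvalue functionals $\lambda_1,\lambda_2,\lambda_3$ on $\mathfrak{a}_\C$. A triangular $3\times 3$ matrix with three distinct eigenvalues is cyclic, so the derogatory hypothesis means that for each $A$ at least two of the $\lambda_i(A)$ agree; that is, $\mathfrak{a}_\C$ is the union of the three subspaces $\ker(\lambda_i-\lambda_j)$. Since a vector space over an infinite field cannot be a finite union of proper subspaces, one of these equals all of $\mathfrak{a}_\C$, so after reindexing $\lambda_1\equiv\lambda_2$ on $\mathfrak{a}$.

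Writing each element as $D+N$ with $D$ diagonal and $N$ strictly upper triangular, the vanishing of the brackets translates into a short list of bilinear relations among the functionals $\lambda_1(=\lambda_2)$, $\lambda_3$ and the entries of the nilpotent parts. These relations confine the strictly-upper parts of $\mathfrak{a}$ to a common two-dimensional space, and combined with a nonzero eigenvalue functional they pin down $I$ inside $\mathfrak{a}$. For instance, in the subcase where all three eigenvalues coincide one finds $\mathfrak{a}=\R A_1\oplus\mathfrak{n}$, where $\mathfrak{n}$ is a two-dimensional space of commuting strictly upper triangular matrices and $A_1=I+M$ with $M$ strictly upper; pairwise commutativity forces $M$ into $\mathfrak{n}$, whence $I=A_1-M\in\mathfrak{a}$.

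I expect the derogatory case to be the only real obstacle: the cyclic case is immediate, but excluding exotic three-dimensional abelian algebras of non-cyclic matrices requires the triangularization and the bookkeeping of the commuting relations above. A cleaner but less elementary alternative would be to invoke the Schur--Jacobson bound, according to which a commutative subalgebra of $M_3(\R)$ has dimension at most $\lfloor 3^2/4\rfloor+1=3$; this makes $\mathfrak{a}$ a \emph{maximal} abelian subalgebra, and since $I$ is central the space $\mathfrak{a}+\R I$ is abelian and contains $\mathfrak{a}$, so maximality gives $I\in\mathfrak{a}$ at once.
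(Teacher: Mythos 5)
Your proposal is correct, but it reaches the conclusion by a genuinely different route from the paper's. The paper puts a single element of the abelian subalgebra into one of four real normal forms (diagonal, a full Jordan block, a mixed Jordan form, and a complex-eigenvalue block) and then inspects the centralizer in each case. You instead split on whether $\mathfrak{a}$ contains a nonderogatory element: there $Z(A)=\R[A]=\mathrm{span}\{I,A,A^2\}$ is three-dimensional, so the dimension count gives $\mathfrak{a}=\R[A]\ni I$ at once --- cleaner than the paper's case-by-case centralizer computation. Your treatment of the all-derogatory case, however, is only a sketch: the ``short list of bilinear relations'' that is supposed to confine the nilpotent parts and ``pin down $I$'' is asserted rather than derived, and as written that step would not stand on its own. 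What makes the proposal complete is your closing observation: by the Schur--Jacobson bound (the result quoted as Proposition 6.1 in Section 6, from Jacobson's paper), a linear family of pairwise commuting matrices in $\g\l(3,\R)$ has dimension at most $1+\lfloor 9/4\rfloor=3$, so a three-dimensional abelian subalgebra is maximal; since $\mathfrak{a}+\R I$ is again abelian and contains $\mathfrak{a}$, maximality forces $I\in\mathfrak{a}$, which is even slightly stronger than the stated lemma. That two-line argument is rigorous, non-circular (Jacobson's theorem is an external input, not derived from this lemma), and arguably preferable both to your main argument and to the paper's proof, which glosses over the subcases with repeated eigenvalues where the centralizer exceeds three dimensions. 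I would promote the Schur--Jacobson argument to the main proof and drop the derogatory-case bookkeeping entirely.
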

\begin{proof} Consider an element of such an abelian subalgebra. We
can put it into one of the following four forms by a \emph{real}
change of basis:
\[  \begin{array}{llll}
 & a) \left[\begin{matrix}
\lambda&0&0\\ 0&\mu&0\\ 0&0&\nu\\ \end{matrix}\right]

\quad b) \left[\begin{matrix} \lambda&1&0\\ 0&\lambda&1\\
0&0&\lambda\\ \end{matrix}\right]

 \quad c) \left[\begin{matrix}
\lambda&0&1\\ 0&\mu&0\\ 0&0&\lambda\\ \end{matrix}\right]

 & d) \left[\begin{matrix} \alpha&\beta&0\\
-\beta&\alpha&0\\ 0 &0&\gamma\\ \end{matrix}\right] (\beta\neq0) .
\end{array} \] \noindent

The proof follows easily now by considering the centralizer in each of these four cases.
\end{proof}

\begin{corollary} The only solvable five-dimensional indecomposable
algebras that can be represented as a subalgebra of $\g\l(3,\R)$ are
$A_{5,36}$ or $A_{5,37}$. \end{corollary}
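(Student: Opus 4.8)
The plan is to combine Lemma \ref{first_lemma} with the structural observation recorded in Section 2 that, among the solvable indecomposable five-dimensional algebras, every one \emph{except} $A_{5,36}$ and $A_{5,37}$ possesses a three-dimensional abelian subalgebra (recall that $A_{5,40}$, the only non-solvable algebra on the list, is excluded from consideration here). It therefore suffices to prove the contrapositive: if a solvable indecomposable five-dimensional $\g$ does contain a three-dimensional abelian subalgebra, then it admits no faithful representation in $\g\l(3,\R)$. So I would suppose, for contradiction, that $\rho:\g\hookrightarrow\g\l(3,\R)$ is faithful and that $\mathfrak a\subseteq\g$ is a three-dimensional abelian subalgebra.

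Next I would push $\mathfrak a$ through $\rho$. Because $\rho$ is faithful, $\rho(\mathfrak a)$ is again a three-dimensional abelian subalgebra, now sitting inside $\g\l(3,\R)$, so Lemma \ref{first_lemma} applies and tells us that $\rho(\mathfrak a)$ contains a nonzero multiple of the identity; rescaling, $I\in\rho(\g)$. Since $I$ is central in $\g\l(3,\R)$ and $\rho$ is injective, the element $z:=\rho^{-1}(I)$ is central in $\g$ and spans a one-dimensional central ideal.

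The final step extracts a decomposition by splitting off the trace. The linear functional $\operatorname{tr}$ restricted to $\rho(\g)$ is nonzero (indeed $\operatorname{tr} I=3$), so its kernel $\mathfrak b:=\rho(\g)\cap\s\l(3,\R)$ is four-dimensional and $\rho(\g)=\langle I\rangle\oplus\mathfrak b$ as vector spaces. As every commutator is traceless, $[\rho(\g),\rho(\g)]\subseteq\mathfrak b$, so $\mathfrak b$ is an ideal (and a subalgebra); together with the fact that $\langle I\rangle$ is a central ideal, this exhibits $\rho(\g)$, and hence $\g$, as a Lie-algebra direct sum $\R\oplus\mathfrak b$. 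That contradicts the indecomposability of $\g$, completing the contrapositive and leaving $A_{5,36}$ and $A_{5,37}$ as the only possible candidates (their actual realizations in $\g\l(3,\R)$ being supplied separately).

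The argument is pleasantly uniform: it needs no split into the real and complex normal forms of Lie's Theorem, since Lemma \ref{first_lemma} is stated for arbitrary abelian subalgebras. I expect the only real points requiring care to be the two ends of the chain. First, one must invoke the Section 2 classification precisely enough that $A_{5,36},A_{5,37}$ are indeed exactly the solvable exceptions lacking a three-dimensional abelian subalgebra. Second, one must make the decomposition watertight, in particular confirming that the multiple of the identity produced by the Lemma is genuinely nonzero and that the traceless part is a complementary \emph{ideal} rather than merely a subspace. Neither is deep, but both are essential to the rigour of the conclusion.
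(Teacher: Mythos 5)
Your argument is correct and follows the paper's own route exactly: apply Lemma \ref{first_lemma} to a three-dimensional abelian subalgebra to force $I$ into the image, conclude decomposability, and then invoke the Section 2 observation that only $A_{5,36}$ and $A_{5,37}$ (among the solvable indecomposables) lack such a subalgebra. The only difference is that you spell out the decomposition $\rho(\g)=\langle I\rangle\oplus(\rho(\g)\cap\s\l(3,\R))$, which the paper asserts without detail.
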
 \begin{proof} If a
subalgebra of $\g\l(3,\R)$ contains $I$ it will be decomposable.
However, the only algebras that have do not have an abelian
three-dimensional subalgebra are $A_{5,36}$ and
$A_{5,37}$.\end{proof}

As regards $A_{5,37}$ let us note that it is equivalent over $\C$ to
$A_{5,36}$ which means that both are equivalent considered as
algebras over $\C$. To see how, make a change of basis according to
$$\overline{e_1}=-\frac{e_1}{2}, \overline{e_2}=\frac{e_2+e_3}{2},
\overline{e_3}=\frac{\sqrt{-1}(e_2-e_3)}{2}, \overline{e_4}=2e_4+e_5,
\overline{e_5}=-\sqrt{-1} e_5.$$ We obtain the following brackets, which are
formally identical to $A_{5,36}$ except for the factor of $\sqrt{-1}$ which
can then be removed by scaling $e_1$: $$[e_2,e_3]=\sqrt{-1} e_1,
[e_1,e_4]=e_1, [e_2,e_4]=e_2, [e_2,e_5]=-e_2, [e_3,e_5]=e_3.$$

The Lie algebra $A_{5,36}$ does have a representation in
$\g\l(3,\R)$: it is isomorphic to the space of trace-free upper
triangular matrices.

Since $A_{5,37}$ is equivalent over $\C$ to $A_{5,36}$ it certainly
has a a representation in $\g\l(3,\C)$. However, we claim that
$A_{5,37}$ does not have a representation in $\g\l(3,\R)$. Indeed if
it did by Lie's Theorem it would have a representation by matrices
of the form $\left[\begin{smallmatrix} a&b&c\\
 -b& a &d\\
 0&0&e
\end{smallmatrix} \right]$ or $\left[\begin{smallmatrix} c&d&e\\
 0& a &b\\
 0&-b&a
\end{smallmatrix} \right].$ However, in either case the derived
algebra is at most two-dimensional whereas for $A_{5,37}$ it is
three-dimensional so $A_{5,37}$ cannot be represented in
$gl(3,\R)$.

We remark finally that algebra $A_{5,40}$ is the Lie algebra of the
special affine group and so by its very definition has a
representation in $gl(3,\R)$ but not an upper triangular triangular
representation. Thus we have determined those algebras that can be
represented in $gl(3,\R)$ and $\g\l(3,\C)$.

\section{Four-dimensional abelian nilradical algebras}


Now we consider algebras $\g$ $A_{5,7}-A_{5,18}$ for which the
algebra is solvable but not nilpotent and for which $\n\i\l(\g)$ is
abelian. We quote next a result of Schur-Jacobson \cite{Jac}.

\begin{proposition}(Schur-Jacobson) The maximal \emph{commutative}
subalgebra of $\g\l(n,\R)$ is of dimension $1+[\frac{n^2}{4}]$ where
$[]$ denotes the integer part of a real number. Up to change of
basis if $n$ is even the subalgebra consists of the upper left hand
block with row entries running from $1+\frac{n}{2}$ to $n$ and
column entries from $1$ to $\frac{n}{2}$ together with multiples of
the identity; if $n$ is odd the subalgebra consists of the upper
left hand blocks with row entries running from either
$\frac{n+1}{2}$ to $n$ and column entries from $1$ to
$\frac{n-1}{2}$ or $\frac{n+3}{2}$ to $n$ and $1$ to
$\frac{n+1}{2}$, respectively, together with multiples of the
identity. \end{proposition}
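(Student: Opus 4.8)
The plan is to treat the two halves of the statement separately: an explicit construction furnishing the lower bound together with the asserted normal form, and an upper bound showing that no commuting family of $n\times n$ matrices can exceed the stated dimension. The lower bound is routine, so the real content lies in the upper bound, and within that in a single nilpotent estimate.

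For the construction, set $m=\lfloor n/2\rfloor$ and take all matrices that are arbitrary in the rectangular block lying strictly below the block diagonal (rows $m+1,\dots,n$ against columns $1,\dots,m$ when $n$ is even, and one of the two indicated rectangular blocks when $n$ is odd) and zero elsewhere, together with the multiples of $I$. Any two such block matrices have product zero, since the row-support of the output of one lies strictly below the column-support accepted by the other; hence they commute, and the multiples of $I$ commute with everything. A direct count gives $m^2+1=\frac{n^2}{4}+1$ when $n$ is even and $m(m+1)+1=\lfloor\frac{n^2}{4}\rfloor+1$ when $n$ is odd (the two admissible rectangular blocks of sizes $(m+1)\times m$ and $m\times(m+1)$ both have dimension $m(m+1)$, which accounts for the two alternatives). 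This matches the claimed value and exhibits precisely the normal form in the statement.

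For the upper bound I would first reduce to an algebraically closed field: complexifying a commuting real subspace yields a commuting complex subspace of the same dimension, so it suffices to bound $\dim_{\C}$ of a commutative subalgebra $\mathcal A\subseteq\g\l(n,\C)$ containing $I$. Being commutative, $\mathcal A$ is simultaneously triangularizable and the space splits into the generalized eigenspaces common to all elements of $\mathcal A$; with respect to this decomposition $\mathcal A$ is block diagonal, acting on the block $V_\lambda$ of size $n_\lambda$ as a scalar plus a commuting family of nilpotent matrices. Writing $N(k)$ for the maximal dimension of a commuting family of nilpotent $k\times k$ matrices, injectivity of the scalar-part plus nilpotent-part map gives $\dim\mathcal A\le k+\sum_\lambda N(n_\lambda)$, where $k$ is the number of distinct eigenvalue functionals. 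A short superadditivity estimate for $\lfloor k^2/4\rfloor$ (namely $\lfloor a^2/4\rfloor+\lfloor b^2/4\rfloor+1\le\lfloor(a+b)^2/4\rfloor$ for $a,b\ge1$) then shows the right-hand side is largest when there is a single block, reducing the whole problem to the claim $N(n)=\lfloor n^2/4\rfloor$.

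The heart of the matter, and the step I expect to be the main obstacle, is thus the nilpotent estimate $N(n)=\lfloor n^2/4\rfloor$. Triangularizability alone only places such a family inside the strictly upper triangular matrices, whose dimension $n(n-1)/2$ far exceeds $\lfloor n^2/4\rfloor$ once $n\ge3$, so commutativity must be used decisively rather than incidentally. I would argue by induction on $n$ along the standard flag, passing either to the hyperplane annihilated by the last coordinate functional or to the quotient by the common fixed line: this drops the dimension of the underlying space by one or two while the commuting relations constrain the dimension gained from the last column and first row, forcing the total to respect the $\lfloor n^2/4\rfloor$ growth. The same extremal analysis simultaneously shows that equality can only occur when the family fills out a single off-diagonal block up to change of basis, which is what pins down the stated normal form for a maximal commutative subalgebra in the general (non-nilpotent) case as well.
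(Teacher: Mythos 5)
First, a point of comparison: the paper does not prove this Proposition at all. It is quoted as the classical Schur--Jacobson theorem with a reference to \cite{Jac} and is then only applied in the case $n=4$. So your proposal cannot be measured against an argument in the paper; it has to stand on its own.

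On its own terms, the lower bound and the reduction of the upper bound to the nilpotent case are sound: the block construction and the count $m(m+1)+1$ resp.\ $m^2+1$, the complexification, the simultaneous block-diagonalization over common generalized eigenspaces, the splitting of each block into a scalar part plus a commuting family of nilpotent parts, and the superadditivity of $k\mapsto \lfloor k^2/4\rfloor+1$ are all correct and standard. The genuine gap is exactly the step you flag as the heart of the matter, namely $N(n)\le\lfloor n^2/4\rfloor$ for a commuting family of nilpotent $n\times n$ matrices. What you offer there --- ``induction along the standard flag, passing to the hyperplane annihilated by the last coordinate functional or to the quotient by the common fixed line, with the commuting relations constraining the dimension gained from the last column and first row'' --- is a description of where an argument might live, not an argument. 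This inequality \emph{is} Schur's theorem: triangularizability alone gives only $n(n-1)/2$, and closing the gap requires a real idea (in the known proofs one picks a suitable element $N$ of the family, splits $V$ using $\ker N$ and $\operatorname{im} N$, and carefully counts the induced maps between the pieces). As written, the proposal does not prove the Proposition.

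A secondary error: your closing claim that the extremal analysis forces a single block, and hence the stated normal form ``in the general case as well,'' is false for $n\le 3$. Your own superadditivity inequality is an \emph{equality} for block sizes $(1,1)$ and $(1,2)$, and correspondingly the diagonal subalgebra of $\g\l(3,\R)$ is a commutative subalgebra of the maximal dimension $3=1+\lfloor 9/4\rfloor$ that is not conjugate to the asserted block form. Uniqueness of the normal form only holds from $n\ge 4$ on --- which happens to be the only case the paper uses, but your argument as stated would ``prove'' it for all $n$.
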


\noindent We apply the Proposition in the case $n=4$. We shall not
want to include  multiples of the identity because it will lead to a
decomposable algebra. Accordingly we assume that we have a basis for
$\n\i\l(\g)$ of the following form:
\[  \begin{array}{llll}
 & E_1 = \left[\begin{smallmatrix}
0&0&0&1\\ 0&0&0&0\\ 0&0&0&0\\ 0&0&0&0 \end{smallmatrix}\right] ,\,
E_2 = \left[\begin{smallmatrix} 0&0&1&0\\ 0&0&0&0\\ 0&0&0&0\\
0&0&0&0 \end{smallmatrix}\right] ,\, E_3 = \left[\begin{smallmatrix}
0&0&0&0\\ 0&0&0&1\\ 0&0&0&0\\ 0&0&0&0 \end{smallmatrix}\right]  ,\,
E_4 = \left[\begin{smallmatrix} 0&0&0&0\\ 0&0&1&0\\ 0&0&0&0\\
0&0&0&0 \end{smallmatrix}\right]. \end{array}  \]


To obtain a full basis for $\g$ we add a generator $E_5$ of the form
$E_5=\left[\begin{smallmatrix} a&b&0&0\\ c&d&0&0\\ 0&0&e&f\\ 0&0&g&h
\end{smallmatrix}\right]$. This form of $E_5$ is dictated by the
requirement that $ad E_5$ map $\n\i\l(\g)$ spanned by
$E_1,E_2,E_3,E_4$ to itself. Then the Jacobi identity is satisfied
and we have the following brackets: \begin{eqnarray} && [E_1,E_5]=
\nonumber (h-a)E_1+gE_2-cE_3,[E_2,E_5]=fE_1+(e-a)E_2-cE_4,\\
\nonumber && [E_3,E_5]=-bE_1+(h-d)E_3+gE_4, \nonumber
[E_4,E_5]=-bE_2+fE_3+(e-d)E_4. \end{eqnarray}

Now consider $ad\, E_5$. Its bottom row and last column are zero. We look at the
upper right $4 \times 4$ block given by $M=\left[\begin{smallmatrix}
h-a&f&-b&0\\ g&e-a&0&-b\\ -c&0&h-d&f\\
0&-c&g&e-d\end{smallmatrix}\right]$ and consider its possible Jordan
normal form: for Lie algebras with abelian codimension one
nilradical the Jordan normal form is a complete invariant apart from
an overall scaling. The matrix $M$ enjoys a particular property

\begin{lemma}
If $\lambda_1,\lambda_2,\lambda_3,\lambda_4$ are the eigenvalues of $M$ there is an ordering of them so that $\lambda_1+\lambda_2=\lambda_3+\lambda_4$.
\end{lemma}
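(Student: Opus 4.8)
The matrix $M$ has a block structure that I would exploit directly. Writing
$$M=\begin{bmatrix} (h-a)I_2 + P & -b\, I_2 \\ -c\, I_2 & (e-d)I_2 + P \end{bmatrix},$$
where $P=\left[\begin{smallmatrix} 0&f\\ g&0\end{smallmatrix}\right]$, one sees that $M$ is built from $2\times 2$ blocks that are each polynomials in the single matrix $P$ together with scalar multiples of $I_2$. The plan is to observe that all four blocks lie in the commutative algebra $\mathbb{R}[P]$, so $M$ can be analyzed through the simultaneous eigenstructure of $P$.

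First I would diagonalize (or put into normal form) the $2\times 2$ matrix $P$, whose eigenvalues are $\pm\sqrt{fg}$; call them $p_1$ and $p_2=-p_1$. Since every block of $M$ is a scalar plus a multiple of $P$, a common eigenbasis for $P$ simultaneously block-diagonalizes $M$ into two $2\times 2$ matrices, one for each eigenvalue $p_k$ of $P$, namely
$$M_k=\begin{bmatrix} (h-a)+p_k & -b \\ -c & (e-d)+p_k \end{bmatrix}.$$
The trace of $M_k$ is $(h-a)+(e-d)+2p_k$, so the two eigenvalues of $M$ arising from block $M_k$ sum to $(h+e-a-d)+2p_k$. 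This is the computational heart: the eigenvalues of $M$ split into a pair from $M_1$ summing to $(h+e-a-d)+2p_1$ and a pair from $M_2$ summing to $(h+e-a-d)+2p_2$.

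Now I would use $p_2=-p_1$. Label the eigenvalues coming from $M_1$ as $\lambda_1,\lambda_2$ and those from $M_2$ as $\lambda_3,\lambda_4$. Then
$$\lambda_1+\lambda_2=(h+e-a-d)+2p_1,\qquad \lambda_3+\lambda_4=(h+e-a-d)+2p_2=(h+e-a-d)-2p_1.$$
These two sums are generally \emph{not} equal, so pairing the eigenvalues ``within each $P$-eigenspace'' is the wrong grouping; the point of the lemma is that a \emph{different} ordering works. I would instead note that the four eigenvalues of $M$ are
$$\tfrac{(h-a)+(e-d)}{2}+p_k \pm \sqrt{\left(\tfrac{(h-a)-(e-d)}{2}\right)^2+bc}\,,\qquad k=1,2,$$
so each eigenvalue has the form $\tfrac12(h+e-a-d)+p_k\pm\delta$ with $\delta$ independent of $k$, and $p_1=-p_2$. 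The crux is then purely combinatorial: I pair the ``$+\delta$'' eigenvalue from $M_1$ with the ``$-\delta$'' eigenvalue from $M_2$, and vice versa. Each such cross-pair sums to $(h+e-a-d)$ because the $\pm\delta$ cancel and $p_1+p_2=0$, giving two pairs with the \emph{same} sum $(h+e-a-d)$, which is exactly the claimed ordering.

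The only genuine obstacle is the degenerate case $fg<0$, where $P$ has complex eigenvalues $\pm i\sqrt{-fg}$ and is not diagonalizable over $\mathbb{R}$; I would handle this by noting that the eigenvalue computation of $M$ is an identity in the entries that holds over $\mathbb{C}$, and the four eigenvalues of $M$ are in any case the roots of its (real) characteristic polynomial, so the cross-pairing argument is carried out over $\mathbb{C}$ and the resulting sum $(h+e-a-d)$ is real. A second minor case, $fg=0$ (so $P$ is nilpotent, $p_1=p_2=0$), collapses the two blocks to equal sums and the lemma holds trivially. Thus in every case one can reorder the $\lambda_i$ so that $\lambda_1+\lambda_2=\lambda_3+\lambda_4=h+e-a-d$.
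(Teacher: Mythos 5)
Your strategy --- exhibit $M$ as a $2\times 2$ array of mutually commuting $2\times 2$ blocks, reduce them simultaneously, and read off the four eigenvalues as a grid of two independent $\pm$ choices --- is sound and genuinely different from the paper's proof, which simply reports the result of an explicit computation of the characteristic roots in the form $A\pm\sqrt{B\pm\sqrt{C}}$. However, the decomposition you wrote down is not correct. The diagonal blocks of $M$ are $\left[\begin{smallmatrix} h-a & f\\ g& e-a\end{smallmatrix}\right]$ and $\left[\begin{smallmatrix} h-d & f\\ g & e-d\end{smallmatrix}\right]$, whereas $(h-a)I_2+P$ and $(e-d)I_2+P$ with $P=\left[\begin{smallmatrix}0&f\\ g&0\end{smallmatrix}\right]$ have constant diagonals $h-a$ and $e-d$; the two agree only when $e=h$. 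Consequently your explicit eigenvalue formula fails in general: for $h=1$, $e=0$, $f=g=1$, $a=b=c=d=0$ the matrix $M$ is block diagonal with both blocks equal to $\left[\begin{smallmatrix}1&1\\ 1&0\end{smallmatrix}\right]$, so its eigenvalues are $\tfrac{1\pm\sqrt{5}}{2}$, each with multiplicity two, while your formula predicts $\{2,1,0,-1\}$. Your final pair-sum $h+e-a-d$ happens to be correct, but only because the total trace is right and $p_1+p_2=0$; the step as written does not establish the lemma.

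The repair is one line: take the common block to be $Q=\left[\begin{smallmatrix} h&f\\ g&e\end{smallmatrix}\right]$, so that $M=\left[\begin{smallmatrix} Q-aI & -bI\\ -cI & Q-dI\end{smallmatrix}\right]$ holds exactly, with all four blocks lying in the commutative algebra $\R[Q]$. Splitting along the eigenvectors of $Q$, whose eigenvalues are $\mu_{1,2}=\tfrac{h+e}{2}\pm\epsilon$ with $\epsilon=\sqrt{\left(\tfrac{h-e}{2}\right)^2+fg}$, yields the $2\times 2$ pieces $\left[\begin{smallmatrix}\mu_k-a & -b\\ -c & \mu_k-d\end{smallmatrix}\right]$, whence the four eigenvalues of $M$ are $\tfrac{h+e-a-d}{2}\pm\epsilon\pm\delta$ with $\delta=\sqrt{\left(\tfrac{a-d}{2}\right)^2+bc}$ independent of $k$. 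Your cross-pairing of $(+,+)$ with $(-,-)$ and of $(+,-)$ with $(-,+)$ then gives two pairs, each summing to $h+e-a-d$, exactly as you intended, and your remarks about carrying out the pairing over $\C$ on the roots of the real characteristic polynomial go through unchanged. Once repaired, your argument is arguably preferable to the paper's: it explains structurally why the roots have the nested-radical shape $A\pm\sqrt{B\pm\sqrt{C}}$ rather than merely asserting it as the outcome of a symbolic computation.
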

\begin{proof}
In fact solving explicitly we find that the eigenvalues are of the form $A \pm \sqrt{B \pm \sqrt{C}}$   where $A,B,C$ are determined in terms of $a,b,c,d,e,f,g,h$.
\end{proof}

 We compare with algebras $A_{5,7}-A_{5,18}$ and
obtain various of these algebras sometimes for special values of the
parameters. It is too much to be able to find the Jordan normal
forms for the matrix in the form above. However, going back to the
the original form of the algebra we can make a change of basis of
the form $S=\left[\begin{smallmatrix} P&0\\
0&Q\\\end{smallmatrix}\right]$ where $P$ and $Q$ are arbitrary
non-singular matrices. More precisely we multiply each of the
matrices $E_i, (1 \leq i \leq 5)$ on the left by $S^{-1}$ and on the
right by $S$; although each of the $E_i, (1\leq i \leq 4)$ are
changed we can in effect use the original $E_i$ as part of the basis
since the $E_i$'s are transformed into linear combinations of
themselves. As regards the matrix $E_5$, its two blocks are
conjugated separately by $P$ and $Q$. As such we may assume that
each of these blocks are in Jordan normal form. There are three
Jordan normal forms for a $2 \times 2$ matrix giving nine cases in
toto. We summarize these cases as follows in which we give first of
all the conditions on $a,b,c,d,e,f,g,h$ to be in one of the nine
cases, then the eigenvalues $\lambda$ of the matrix $ad(E_5)$ and
finally conditions on the parameters of the algebras
$A_{5,7}-A_{5,18}$ so that they can have a representation in
$\g\l(4,\R)$.

\begin{enumerate}

\item $c=-b, d=a, g=-f, h=e, (bf\neq0): \lambda = e - a \pm ib \pm
if$

 $A_{5,13apr}$ $g=a=p=1, f=b=\frac{r}{2}$, $A_{5,17}$ $e-a=p=r, b+f=1, b-f=s$

\item $c=-b, d=a, f=0, g=0, (b\neq0): \lambda = e - a \pm ib, h - a
\pm ib $

$A_{5,17prs}$ $e-a=p, h-a=q, b=s=1$

\item $c=-b, d=a, h=e, f=1, g=0, (b\neq0): \lambda = e - a \pm ib, e
- a \pm ib$

$A_{5,18p}$ $e-a=p, b=1$

\item  $c=0, b=0, g=-f, h=e, (f\neq0):\lambda = e - a \pm if,  e - d
\pm if$

$A_{5,17prs}$ $e-a=p, e-d=q, f=s=1$

\item $c=0, b=0, f=0, g=0: \lambda = e - a,  e - d, h-a, h-d$

$A_{5,7abc}$ $e-a=1, e-d=\overline{a}, h-a=\overline{b}
,h-d=\overline{a}$ so $\overline{a}+\overline{b}-\overline{c}=1$

\item $c=0, b=0, h=e, f=1, g=0: \lambda = e - a,  e - a, e - d,  e
-d$

$A_{5,15a}$ $e-a=1, e-d=\overline{a}$

\item  $d=a, c=0, b=1, g=-f, h=e, (f\neq0): \lambda = e - a \pm if,
e - a \pm if$

$A_{5,18p}$ $e-a=p, f=1$

\item $d=a, c=0, b=1, f=0, g=0: \lambda = e - a,  e - a, h-a, h-a$

$A_{5,15a}$ $e-a=1, h-a=\overline{a}$

\item $d=a, c=0, b=1, h=e, f=1, g=0: \lambda = e - a,  e - a, e-a,
e-a$

$A_{5,11c}$ $e-a=1$ so $\overline{c}=1$.

\end{enumerate}

\noindent To summarize: in $A_{5,7}-A_{5,18}$, the algebras
$A_{5,8c},A_{5,9bc},A_{5,10},A_{5,12},A_{5,14p}$ and $A_{5,16pq}$
do not occur at all as subalgebras of $\g\l(4,\R)$. Algebras
$A_{5,15a}$ and $A_{5,18p}$ do occur and $A_{5,7abc}$ occurs but only for
the case $a+b-c=1$, $A_{5,11c}$ for $c=1$, $A_{5,13apr}$ for $a=p=1$  and $A_{5,17}$ for $p=r$ or
$s=1$.


\section{Five-dimensional nilpotent algebras}

Now suppose that $\g$ is nilpotent and can be represented in
$\g\l(4,\C)$. We note that both the algebras $A_{5,1}$ and $A_{5,2}$
have a four-dimensional abelian subalgebra indeed ideal. For these
algebras we can proceed much as we did for the abelian nilradical
case except now we must have that $ad\, E_5$ is nilpotent. We note
first of all that we can add a multiple of the identity to $E_5$ and
it is does not change $ad\,E_5$ so we may assume that $E_5$ has
trace zero and so we put $h=-(a+d+e)$. Next we shall demand that
$ad\,E_5$ has trace zero which gives that $d=-a$ and then we find
that the trace of $(ad\,E_5)^3$ is  zero. In order to make $ad\, E_5$
nilpotent it is sufficient to have that the trace of $(ad\,E_5)^2$ and
the determinant of $ad\,E_5$ zero for then $ad\,E_5$ will have all
eigenvalues zero. These conditions give us $a^2+bc=0$ and
$e^2+fg=0$. In order for $E_5$ not to vanish entirely and using the
block change of basis we can reduce to the cases where only $b=1$ or
$f=1$ are the only non-zero entries or else $b=f=1$. The first two
of these cases correspond to $A_{5,1}$ whereas the third after
making a change of basis $$e_1^{\prime}=\frac{e_1-e_2}{2},
e_2^{\prime}=e_2, e_3^{\prime}=-\frac{e_3}{2} ,e_4^{\prime}=e_1+e_4
, e_5^{\prime}=e_5$$ becomes $[e_1,e_5]=e_2,[e_3,e_5]=e_1$. This
algebra is decomposable being a direct sum of $A_{4,1}$ and $\R$. In
particular for $A_{5,2}$ we cannot have $\mu=4$.

Now we consider algebras $A_{5,3}$ and $A_{5,6}$. For $A_{5,3}$  we
have non-zero brackets $[e_3,e_4] = e_2$, $[e_3,e_5] = e_1$,
$[e_4,e_5] =e_3$. The first two brackets give us a copy of
$A_{5,1}$; indeed if we permute $e_3$ and $e_5$ and then change the
sign of $e_5$ we obtain $[e_3,e_5] = e_1, [e_4,e_5] = e_2, [e_3,e_4]
=e_5$. Now the argument above gave us, up to change of basis, a
unique representation of $A_{5,1}$ in $\g\l(4,\R)$. Since $[e_3,e_4]
\neq e_5$ we conclude that there is no representation of $A_{5,3}$
in $\g\l(4,\R)$. Similarly for $A_{5.6}$ $[e_3,e_4]=e_1,
[e_2,e_5]=e_1$, $[e_3,e_5]=e_2, [e_4,e_5]=e_3$ the latter three
brackets produce a copy of $A_{5.2}$. However, we have shown that
there is no representation of $A_{5,2}$ in $\g\l(4,\R)$ and
therefore not for $A_{5,6}$, either.

\section{Group representations corresponding to Lie algebras in Dimension Five and less}

\noindent $A_{2.1}$ $[e_1,e_2]=e_2$:
\[ S=\left[ \begin{matrix}
e^{x} &y \\
 0 & 1
\end{matrix} \right]. \]
\noindent Right-invariant vector fields: $-(D_x+yD_y),D_y$.


\noindent  $A_{3.1}$ $[e_2,e_3]=e_1$:
\[ S=\left[ \begin{matrix}
1 &x&z \\
 0 & 1  &y \\
 0&0&1
\end{matrix} \right]. \]
\noindent Right-invariant vector fields: $D_z,D_y,D_x+yD_z$.\\

\noindent $A_{3.2} [e_1,e_3]=e_1, [e_2,e_3]=e_1+e_2$:
\[ S=\left[ \begin{matrix}
e^{z}& ze^{z}&x\\
0& e^{z} &y \\
 0&0&1
\end{matrix} \right]. \]
\noindent Right-invariant vector fields: $D_x,D_y,D_z+(x+y)D_x+yD_y$.\\

\noindent $A_{3.3},A_{3.4},A_{3.5a}$ $[e_1,e_3]=e_1,[e_2,e_3]=ae_2$:
\[ S=\left[ \begin{matrix}
e^{z}& 0&x\\
0& e^{az} &y \\
 0&0&1
\end{matrix} \right]. \]
\noindent Right-invariant vector fields: $D_x,D_y,D_z+xD_x+ayD_y$.\\

\noindent $A_{3.6},A_{3.7a} [e_1,e_3]=ae_1-e_2,[e_2,e_3]=e_1+ae_2$:
\[ S=\left[ \begin{matrix}
e^{az}\cos z & e^{az}\sin z&x\\
-e^{az}\sin z& e^{az}\cos z&y \\
 0&0&1
\end{matrix} \right]. \]
\noindent Right-invariant vector fields: $D_x,D_y,D_z+(ax+y)D_x+(ay-x)D_y$.\\

\noindent $A_{3.8}$ $[e_1,e_3]=-2e_2, [e_1,e_2]=
e_1,[e_2,e_3]=e_3$:
\[S = \left[ \begin {array}{cc} \cosh  x  +\sinh  x
  \cosh y  &- {e^{-z}} \sinh x  \sinh
y \\\noalign{\medskip} {e^{z}} \sinh  x  \sinh
  y &\cosh x  -\sinh  x
 \cosh y  \end {array} \right] \]
\noindent Right-invariant vector fields:
$\frac{e^{z}}{2}(\sinh y D_x-\frac{(\cosh x \cosh y-\sinh x)}{\sinh x }D_y-\frac{(\sinh x \cosh y-\cosh x)}{(\sinh y \sinh x)}D_z),\\
\frac{1}{2}(\cosh y D_x-\frac{\cosh x \sinh y}{\sinh x}D_y-D_z),
\frac{e^{-z}}{2}(-\sinh y D_x+\frac{(\cosh x\cosh y+\sinh x)}{\sinh x}D_y+\frac{(\sinh x\cosh y+\cosh x)}{(\sinh y \sinh x)}D_z))$.\\

\noindent $A_{3.9}$ $[e_1,e_2]=e_3,[e_2,e_3]= e_1 ,[e_3,e_1]= e_2$:
 \[ S =\left[ \begin {array}{ccc} \cos  x \cos  y
 \cos  z  -\sin  x  \sin z
 &\sin x  \cos  y  \cos  z
  +\cos  x  \sin  z  &-\sin y
 \cos  z  \\\noalign{\medskip}-\cos  x
 \cos  y  \sin z -\sin  x
 \cos z  &-\sin  x  \sin z
  \cos  y  +\cos  x  \cos  z
 &\sin  y  \sin z
\\\noalign{\medskip}\cos x  \sin  y  &\sin
 x  \sin  y  &\cos y
\end {array} \right]. \]
\noindent Right-invariant vector fields: $D_z,
\frac{\sin z}{\sin y}D_x+\cos z D_y-\frac{\cos y\sin z}{\sin y}D_z,
\frac{\cos z}{\sin y }D_x-\sin z D_y-\frac{\cos y \cos z}{\sin y}D_z$.


\noindent $A_{4.1}$ $[e_2,e_4]=e_1, [e_3,e_4]=e_2$:
\[ S=\left[ \begin{matrix}
1 &w& \frac{w^2}{2} & x \\
0 & 1 &w& y\\
0 & 0 & 1 &z \\
0&0&0&1
\end{matrix} \right]. \]
\noindent Right-invariant vector fields: $D_x,D_y,D_z,D_w+yD_x+zD_y$.\\

\noindent $A_{4.2a} (a \neq 0)$ $[e_1,e_4]=ae_1, [e_2,e_4]=e_2, [e_3,e_4]=e_2+e_3$:
\[ S=\left[ \begin{matrix}
e^{aw} &0& 0 & x \\
0 & e^{w} &we^{w}& y\\
0 & 0 & e^{w} &z \\
0&0&0&1
\end{matrix} \right]. \]
\noindent Right-invariant vector fields: $D_x,D_y,D_z,D_w+axD_x+(y+z)D_y+zD_z$.\\

\noindent $A_{4.3}$ $[e_1,e_4]=e_1, [e_3,e_4]=e_2$:
\[ S=\left[ \begin{matrix}
e^{w} &0& 0 &x \\
0 & 1 &w& y\\
0 & 0 & 1 &z \\
0&0&0&1
\end{matrix} \right]. \]
\noindent Right-invariant vector fields: $D_x,D_y,D_z,D_w+xD_x+zD_y$.\\

\noindent $A_{4.4}$ $[e_1,e_4]=e_1, [e_2,e_4]=e_1+e_2, [e_3,e_4]=e_2+e_3$:
\[ S=\left[ \begin{matrix}
e^{w} &we^{w}& \frac{w^2}{2}e^{w} &x \\
0 & e^{w} &we^{w}& y\\
0 & 0 & e^{w} &z \\
0&0&0&1
\end{matrix} \right]. \]
\noindent Right-invariant vector fields: $D_x,D_y,D_z,D_w+(x+y)D_x+(y+z)D_y+zD_z$.\\

\noindent $A_{4.5ab}$ $(0 \leq ab, \quad -1 \leq a \leq b \leq 1)$ $[e_1,e_4]=e_1, [e_2,e_4]=ae_2, [e_3,e_4]=be_3$:
\[ S=\left[ \begin{matrix}
e^{w} &0& 0 &x \\
0 & e^{aw} &0& y\\
0 & 0 & e^{bw} &z \\
0&0&0&1
\end{matrix} \right]. \]
\noindent Right-invariant vector fields: $D_x,D_y,D_z,D_w+xD_x+ayD_y+bzD_z$.\\

\noindent $A_{4.6ab}$ $(a \neq 0, b \geq 0)$ $[e_1,e_4]=ae_1, [e_2,e_4]=be_2-e_3, [e_3,e_4]=e_2+be_3$:
\[ S=\left[ \begin{matrix}
e^{aw} &0& 0 &x \\
0 & e^{bw}\cos w &e^{bw}\sin w& y\\
0 & -e^{bw}\sin w & e^{bw}\cos w &z \\
0&0&0&1
\end{matrix} \right]. \]
\noindent Right-invariant vector fields: $D_x,D_y,D_z,D_w+axD_x+(by+z)D_y+(bz-y)D_z$.\\

\noindent $A_{4.7}  [e_2,e_3]=e_1, [e_1,e_4]=2e_1, [e_2,e_4]=e_2, [e_3,e_4]=e_2+e_3 $:
\[ S=\left[ \begin{matrix}
e^{2w} &-ze^{w}& (y-zw)e^{w} &x \\
0 & e^{w} &we^{w}& y\\
0 & 0 & e^{w}&z \\
0&0&0&1
\end{matrix} \right]. \]
\noindent Right-invariant vector fields: $-\frac{1}{2}D_x,zD_x+D_y,D_z-yD_x,D_w+2xD_x+(y+z)D_y+zD_z$.\\

\noindent $A_{4.8},A_{4.9b}\,(-1 \leq b \leq 1)$ $[e_2,e_3]=e_1, [e_1,e_4]=(b+1)e_1, [e_2,e_4]=e_2, [e_3,e_4]=be_3$:
\[ S=
 \left[ \begin {array}{ccc} {{\rm e}^{ \left( b+1 \right) w}}&y{
{\rm e}^{bw}}&x\\ \noalign{\medskip}0&{{\rm e}^{bw}}&z
\\ \noalign{\medskip}0&0&1\end {array} \right]. \]
\noindent Right-invariant vector fields: $D_x, zD_x+D_y, -D_z, D_w+(xb+x)D_x+yD_y+bzD_z$.\\

\noindent $A_{4.10},A_{4.11}\, ( a \geq 0)$ $[e_2,e_3]=e_1, [e_1,e_4]=2ae_1, [e_2,e_4]=ae_2-e_3, [e_3,e_4]=e_2+ae_3$:
\[ S=\left[ \begin{matrix}
 e^{2aw} &-e^{aw}(x\sin w +y\cos w )&e^{aw}(x\cos w -y\sin w)&z\\
0 & e^{aw}\cos w& e^{aw}\sin w & x \\
0&-e^{aw}\sin w&e^{aw}\cos w&y\\
 0&0&0&1
\end{matrix} \right]. \]
\noindent Right-invariant vector fields: $-2D_z, D_x+yD_z, D_y-xD_z, D_w+(ax+y)D_x+(ay-x)D_y+2azD_z$.\\

\noindent $A_{4.12}\, [e_1,e_3]=e_1, [e_2,e_3]=e_2, [e_1,e_4]=-e_2, [e_2,e_4]=e_1$:
\[ S=\left[ \begin{matrix}
e^{z}\cos w &e^{z}\sin w & x\\
 -e^{z}\sin w& e^{z}\cos w & y\\
0&0&1
\end{matrix} \right]. \]
\noindent Right-invariant vector fields: $D_x, D_y, D_z+xD_x+yD_y,D_w+yD_x-xD_y$.\\

\noindent $A_{5.1}\,  [e_3,e_5]=e_1, [e_4,e_5]=e_2$: \[ S=\left[
\begin{matrix} 1 &q& x & z \\ 0 & 1&w& y\\ 0 & 0 & 1 &0 \\
0&0&0&1\\\end{matrix} \right].\] \noindent Right-invariant vector
fields: $-D_q, -D_x, D_y, D_z, D_w-yD_q-zD_x.$\\

\noindent $A_{5.2}\, [e_2,e_5]=e_1, [e_3,e_5]=e_2, [e_4,e_5]=e_3$: \[
S=\left[ \begin{matrix} 1 &w& \frac{w^2}{2} &  \frac{w^3}{6}&q \\ 0
& 1 &w& \frac{w^2}{2}& x\\ 0 & 0 & 1  &w&y \\ 0&0&0&1&z\\ 0&0&0&0&1
\end{matrix} \right]. \] \noindent Right-invariant vector fields:
$D_q,D_x,D_y,D_z,D_w+xD_q+yD_x+zD_y$.\\

\noindent $A_{5.3}\, [e_3,e_4] = e_2$ , $[e_3,e_5] = e_1$, $[e_4,e_5]
=e_3$: \[ S=\left[ \begin{matrix} 1 & 0 & -z&y-zw&q \\ 0 & 1
&w&\frac{w^2}{2}&x \\ 0 & 0 & 1 & w &2y \\ 0 & 0 & 0 &1&2z \\
0&0&0&0&1 \end{matrix} \right].\] \noindent Right-invariant vector
fields: $2D_x,-4D_q,D_y+2zD_q,D_z-2yD_q,D_w+2yD_x+zD_y$.\\

\noindent $A_{5.4}\, [e_2,e_4]=e_1, [e_3,e_5]=e_1$:  \[ S=\left[
\begin{matrix} 1 & x & y&q \\ 0 & 1 &0& z\\ 0 & 0 & 1  &w \\ 0&0&0&1
\end{matrix} \right]. \]
\noindent Right-invariant vector fields:
$D_q, D_z, D_w, D_x+zD_q, D_y+wD_q$.\\

\noindent $A_{5.5}\, [e_3,e_4]=e_1, [e_2,e_5]=e_1$ $[e_3,e_5]=e_2$: \[
S=\left[ \begin{matrix} 1&q &w+\frac{q^2}{2}&x \\ 0 & 1 &q&y \\ 0 &
0 & 1 &z \\ 0 & 0 & 0 &1 \\ \end{matrix} \right].\] \noindent
Right-invariant vector fields: $D_x, D_y, D_z,
D_w+zD_x,D_q+yD_x+zD_y.$\\

\noindent $A_{5.6}\,  [e_3,e_4]=e_1, [e_2,e_5]=e_1$, $[e_3,e_5]=e_2,
[e_4,e_5]=e_3$:  \[ S=\left[ \begin{matrix} 1 & 2w &
w^2-z&y-zw+\frac{w^3}{3}&q \\ 0 & 1 &w&\frac{w^2}{2}&x \\ 0 & 0 & 1
& w &y \\ 0 & 0 & 0 &1&z \\ 0&0&0&0&1 \end{matrix} \right]. \]
\noindent Right-invariant vector fields: $2D_q, -D_x, D_y+zD_q,
-D_z+yD_q,-(D_w+2xD_q+yD_x+zD_y)$.\\

\noindent $A_{5.7abc}\, (abc \neq 0, -1 \leq c \leq b \leq a \leq 1),\,
[e_1,e_5]=e_1, [e_2,e_5]=ae_2, [e_3,e_5]=be_3, [e_4,e_5]=ce_4$:
\[S=\left[ \begin{matrix} e^{w} &0& 0 &0&q \\ 0&e^{aw} &0& 0 &x \\ 0
&0& e^{bw} &0& y\\ 0 & 0&0 & e^{cw} &z \\ 0&0&0&0&1 \end{matrix}
\right]. \]
\noindent Right-invariant vector fields:
$D_q,D_x,D_y,D_z,D_w+qD_q+axD_x+byD_y+czD_z$.

$$S= \left[ \begin {array}{cccc} {{\rm e}^{aq}}&0&w{{\rm e}^{ \left(
a-1 \right) q}}&x\\ \noalign{\medskip}0&{{\rm e}^{bq}}&y{{\rm e}^{
 \left( a-1 \right) q}}&z\\ \noalign{\medskip}0&0&{{\rm e}^{ \left( a-
1 \right) q}}&0\\ \noalign{\medskip}0&0&0&1\end {array} \right] (
c=b-a+1).$$ \noindent Right-invariant vector fields: $D_w, D_x, D_z,
D_y, D_q+axD_x+(b+1-a)yD_y+bzD_z+wD_w$.\\

\noindent $A_{5.8c}\, \quad (0 < c \leq 1) \quad [e_2,e_5]=e_1$,
$[e_3,e_5]=e_3, [e_4,e_5]=ce_4 $: \[ S=\left[ \begin{matrix} e^{cw}
& 0 &0& 0&q \\ 0 & e^{w} &0&0&x \\ 0 & 0 & 1 & w &y \\ 0 & 0 & 0
&1&z \\ 0&0&0&0&1 \end{matrix} \right]. \] Right-invariant vector
fields: $D_x,D_y,D_z,D_q,D_w+cqD_q+yD_x+zD_z$.\\

\noindent $A_{5.9bc}\, \quad (0 \neq c \leq b)$ $[e_1,e_5]=e_1$,
$[e_2,e_5]=e_1+e_2, [e_3,e_5]=be_3, [e_4,e_5]=ce_4$.
\[ S=\left[
\begin{matrix} e^{cw} & 0 &0& 0&q \\ 0 & e^{bw} &0&0&x \\ 0 & 0 &
e^{w} & we^{w} &y \\ 0 & 0 & 0 &e^{w}&z \\ 0&0&0&0&1 \end{matrix}
\right].\] Right-invariant vector fields: $ D_y, D_z, D_x, D_q,
D_w+bxD_x+cqD_q+(y+z)D_y+zD_z$.\\

\noindent $A_{5.10}\, [e_2,e_5]=e_1$, $[e_3,e_5]=e_2, [e_4,e_5]=e_4$:
 \[ S=\left[ \begin{matrix}
 e^{w} & 0 &0& 0&q \\
  0 & 1 &w&\frac{w^2}{2}&x \\
  0 & 0 & 1 & w &y \\
  0 & 0 & 0 &1&z \\
  0&0&0&0&1
 \end{matrix} \right]. \]
Right invariant vector fields: $D_x,D_y,D_z,D_q ,
D_w+qD_q+yD_x+zD_y$.\\

\noindent $A_{5.11c}\, (c \neq 0)$ $[e_1,e_5]=e_1,
[e_2,e_5]=e_1+e_2,[e_3,e_5]=e_2+e_3,[e_4,e_5]=ce_4$:

\noindent \[ S=\left[ \begin{matrix} e^{cw}& 0& 0& 0& q\\ 0& e^{w}&
we^{w}& \frac{w^2e^{w}}{2}& x\\ 0& 0& e^{w}& we^{w}& y\\ 0& 0& 0&
e^{w}& z\\ 0& 0& 0& 0& 1 \end{matrix} \right]. \]
Right-invariant vector fields: $D_x, D_y, D_z, D_q,
D_w+(x+y)D_x+(y+z)D_y+zD_z+cqD_q$.\\
$$S= \left[
\begin {array}{cccc} {{\rm e}^{q}}&q{{\rm e}^{q}}&w&x \\
\noalign{\medskip}0&{{\rm e}^{q}}&y&z\\ \noalign{\medskip}0&0&1&q \\
\noalign{\medskip}0&0&0&1\end {array} \right] (c=1).$$
\noindent Right invariant vector fields: $D_x, \frac{1}{2}(D_z-D_w
-qD_x), -\frac{1}{2}(D_y+qD_z), \frac{1}{2}(D_w+D_z+qD_x),
D_q+(z+x)D_x+yD_y+zD_z+(y+w)D_w$.\\

\noindent $A_{5.12}\,$ $[e_1,e_5]=e_1, [e_2,e_5]=e_1+e_2,
[e_3,e_5]=e_2+e_3, [e_4,e_5]=e_3+e_4$. \[ S=\left[ \begin{matrix}
e^{w}& we^{w}& \frac{w^2e^{w}}{2}&\frac{w^3e^{w}}{6}&q \\ 0& e^{w}&
we^{w}& \frac{w^2e^{w}}{2}& x\\ 0& 0& e^{w}& we^{w}& y\\ 0& 0& 0&
e^{w}& z\\ 0& 0& 0& 0& 1 \end{matrix} \right].\] \noindent
Right-invariant vector fields:
$D_q,D_x,D_y,D_z,D_w+(q+x)D_q+(x+y)D_x+(y+z)D_y+zD_z+qD_q$.\\

\noindent $A_{5.13apr}\, (r \neq 0, 0 < |a| \leq 1)$ $[e_1,e_5]=e_1,
[e_2,e_5]=ae_2, [e_3,e_5]=pe_3-re_4, [e_4,e_5]=re_3+pe_4$:
\[S=\left[ \begin{matrix} e^{w}&0&0&0&q\\ 0&e^{aw}&0&0&x\\
0&0&e^{pw}\cos rw& e^{pw}\sin rw&y\\ 0&0&-e^{pw}\sin rw& e^{pw}\cos
rw&z \\ 0&0&0&0&1 \end{matrix} \right].\]
\noindent Right-invariant vector fields: $D_q,D_x, D_y,
D_z,D_w+qD_q+axD_x+(py+rz)D_y+(pz-ry)D_z$.
$$S=\left[ \begin {array}{cccc} {{\rm e}^{-q}}\cos  \frac{rq}{2}
  &{{\rm e}^{-q}}\sin  \frac{rq}{2}  &w&x
\\ \noalign{\medskip}-{{\rm e}^{-q}}\sin  \frac{rq}{2} &{
{\rm e}^{-q}}\cos \frac{rq}{2}  &y&z\\ \noalign{\medskip}0&0&
\cos \frac{rq}{2}  &\sin \frac{rq}{2}
\\ \noalign{\medskip}0&0&-\sin \frac{rq}{2} &\cos \frac{rq}{2} \end {array} \right] (a=p=1).$$
\noindent Right-invariant vector fields:
$\frac{1}{2}(\cos \frac{rq}{2}D_x-\cos \frac{rq}{2}D_y-\sin \frac{rq}{2}D_z-\sin \frac{rq}{2} D_w),
\frac{1}{2}((\cos \frac{rq}{2}-\sin \frac{rq}{2})D_w+ (\cos \frac{rq}{2}+\sin \frac{rq}{2})D_x-(\cos \frac{rq}{2}+\sin \frac{rq}{2})D_y +(\cos \frac{rq}{2}-\sin \frac{rq}{2})D_z)),
\frac{1}{2}(\cos \frac{rq}{2}D_w+\sin \frac{rq}{2}D_x+\sin \frac{rq}{2}D_y-\cos \frac{rq}{2}D_z),
\frac{1}{2}(\sin \frac{rq}{2}D_w-\cos \frac{rq}{2}D_y-\sin \frac{rq}{2}D_z-\cos \frac{rq}{2}D_x),
(x-\frac{rz}{2})D_x+(\frac{rw}{2}+y)D_y+(\frac{rx}{2}+z)D_z+(w-\frac{ry}{2})D_w-D_q$.\\

\noindent $A_{5.14p}\,[e_2,e_5]=e_1$, $[e_3,e_5]=pe_3-e_4,
[e_4,e_5]=e_3+pe_4$: \[ S=\left[ \begin{matrix} 1 & x &0& 0&q \\ 0 &
1 &0&0&w \\ 0 & 0 & e^{pw}\cos w & e^{pw}\sin w &y \\ 0 & 0
&-e^{pw}\sin w&e^{pw}\cos w &z \\ 0&0&0&0&1 \end{matrix} \right]. \]
\noindent Right-invariant vector fields: $D_q, D_x, D_y, D_z,
D_w+xD_q+(py+z)D_y+(pz-y)D_z$.\\

\noindent $A_{5.15a}\, [e_1,e_5]=e_1$,
$[e_2,e_5]=e_1+e_2,[e_3,e_5]=ae_3,[e_4,e_5]=e_3+ae_4$: $$S= \left[
\begin {array}{cccc} {{\rm e}^{aq}}&q{{\rm e}^{aq}}&w{{\rm e}^ {
\left( a-1 \right) q}}&x\\ \noalign{\medskip}0&{{\rm e}^{aq}}&y{
{\rm e}^{ \left( a-1 \right) q}}&z\\ \noalign{\medskip}0&0&{{\rm
e}^{ \left( a-1 \right) q}}&0\\ \noalign{\medskip}0&0&0&1\end
{array} \right].$$
\noindent Right-invariant vector fields: $D_w,
D_y, D_x, D_z, D_q+(z+ax)D_x+yD_y+azD_z+(w+y)D_w$.\\

\noindent $A_{5.16pr}\,( r > 0)$ $[e_1,e_5]=e_1, [e_2,e_5]=e_1+e_2,
[e_3,e_5]=pe_3-re_4,  [e_4,e_5]=re_3+pe_4$: \[ S=\left[
\begin{matrix} e^{w}& we^{w}& 0& 0& q\\ 0& e^{w}& 0& 0& x\\ 0& 0&
e^{pw}\cos rw& e^{pw}\sin rw& y\\ 0& 0& -e^{pw}\sin rw& e^{pw}\cos
rw& z\\ 0&0&0&0& 1 \end{matrix} \right]. \] Right-invariant vector
fields: $D_q,D_x,D_y,D_z,D_w+(q+x)D_q+xD_x+(py+rz)D_y+(pz-ry)D_z$.\\

\noindent $A_{5.17prs}\, (s > 0)$  $[e_1,e_5]=pe_1-e_2,
[e_2,e_5]=e_1+pe_2, [e_3,e_5]=re_3-se_4, [e_4,e_5]=se_3+re_4$: \[
S=\left[ \begin{matrix} e^{pw}\cos w& e^{pw}\sin w& 0& 0& x\\
-e^{pw}\sin w& e^{pw}\cos w& 0& 0& y\\ 0& 0& e^{rw}\cos sw&
e^{rw}\sin sw& z\\ 0& 0& -e^{rw}\sin sw& e^{rw}\cos sw& q\\
0&0&0&0&1 \end{matrix} \right]. \] Right-invariant vector fields:
$D_x,D_y,D_z,D_q,D_w+(px+y)D_x+(py-x)D_y+(rz+sq)D_z+(rq-sz)D_q$.


$$S= \left[ \begin {array}{cccc} {{\rm e}^{pq}}\cos \frac{(s+1)q}{2}
&{{\rm e}^{pq}}\sin  \frac{(s+1)q}{2} &w&x\\
\noalign{\medskip}-{{\rm e}^{pq}}\sin
 \frac{(s+1)q}{2}  &{{\rm e}^{pq}}\cos \frac{(s+1)q}{2}
&y&z\\ \noalign{\medskip}0 &0&\cos  \frac{(s-1)q}{2} & \sin
\frac{(s-1)q}{2} \\ \noalign{\medskip}0&0&-\sin
 \frac{(s-1)q}{2} &\cos  \frac{(s-1)q}{2} \end {array} \right]\, (r=p).$$

\noindent Right-invariant vector fields: $\cos \frac{(s-1)q}{2} D_z-\sin
\frac{(s-1)q}{2}D_y-\sin \frac{(s-1)q}{2}D_x-\cos
\frac{(s-1)q}{2}D_w, \cos
\frac{(s-1)q}{2}D_y+\cos \frac{(s-1)q}{2}D_x-\sin
\frac{(s-1)q}{2}D_w+\sin \frac{(s-1)q}{2}D_z, \sin \frac{(s-1)q}{2}D_x-\sin
\frac{(s-1)q}{2}D_y+\cos
\frac{(s-1)q}{2}D_w+\cos \frac{(s-1)q}{2}D_z,\\ -\cos
\frac{(s-1)q}{2}D_y+\cos \frac{(s-1)q}{2}D_x-\sin
\frac{(s-1)q}{2}D_w-\sin \frac{(s-1)q}{2}D_z,
 D_q+\frac{(s+1)z+2py}{2}D_y-\frac{(s+1)z-2px}{2}
D_x-\frac{(s+1)y-2pw}{2}D_w+\frac{(s+1)x+2pz}{2}D_z$.\\
$$S= \left[
\begin {array}{cccc} \cos q  &\sin q  &w&x\\ \noalign{\medskip}-\sin
q  &\cos q  &y&z\\ \noalign{\medskip}0&0&{{\rm e}^{pq}}&0 \\
\noalign{\medskip}0&0&0&{{\rm e}^{rq}}\end {array} \right]\,
(s=1).$$ \noindent Right-invariant vector fields: $e^{pq}D_w,
e^{pq}D_y, e^{rq}D_x, e^{rq}, -D_q-zD_x+wD_y+xD_z-yD_w$.\\

\noindent $A_{5.18p}\, ( p \geq 0$)  $[e_1,e_5]=pe_1-e_2,
[e_2,e_5]=e_1+pe_2,  [e_3,e_5]=e_1+pe_3-e_4, [e_4,e_5]=e_2+e_3+pe_4$
$$S=  \left[ \begin {array}{cccc} \cos q &\sin  q  &w&x\\
\noalign{\medskip}-\sin  q &\cos  q  &y&z\\
\noalign{\medskip}0&0&{{\rm e}^{pq}}&q{{\rm e}^{pq}} \\
\noalign{\medskip}0&0&0&{{\rm e}^{pq}}\end {array} \right] .$$

\noindent Right-invariant vector fields: $e^{pq}D_x,-e^{pq}D_z,
e^{pq}(D_w + q D_x),-e^{pq}(D_y+qD_z),-D_q-z D_x+w D_y+x D_z-y D_w$.\\

\noindent $A_{5.19ab}\,(b \neq 0)\,, [e_2,e_3]=e_1, [e_1,e_5]=ae_1,
[e_2,e_5]=e_2, [e_3,e_5]=(a-1)e_3, [e_4,e_5]=be_4$: \[S=\left[
\begin {array}{cccc} {e^{aw}}&{e^{w}}x&0&z\\\noalign{\medskip}0
&{e^{w}}&0&y\\\noalign{\medskip}0&0&{e^{bw}}&q\\\noalign{\medskip}0&0&0
&1\end {array} \right].\] \noindent Right-invariant vector fields:
$D_z, D_y, D_x+yD_z, D_q, D_w+bqD_q+(a-1)xD_x+yD_y+azD_z$.\\

\noindent $A_{5.20a}\, [e_2,e_3]=e_1$, $[e_1,e_5]=ae_1$,
$[e_2,e_5]=e_2$, $[e_3,e_5]=(a-1)e_3$, $[e_4,e_5]=e_1+ae_4$:

\[S=\left[ \begin {array}{cccc} {e^{aw}}&{e^{w}}x&w{e^{aw}}&z
\\\noalign{\medskip}0&{e^{w}}&0&y\\\noalign{\medskip}0&0&{e^{aw}}&q
\\\noalign{\medskip}0&0&0&1\end {array} \right].\] \noindent
Right-invariant vector fields:  $D_z, D_y, D_x + y D_z, D_q, D_w+
aqD_q + (a-1)xD_x + y D_y + (q + a z) D_z$.\\

\noindent $A_{5.21}\, [e_2,e_3]=e_1,[e_1,e_5]=2e_1, [e_2,e_5]=e_2+e_3,
[e_3,e_5]=e_3+e_4, [e_4,e_5]=e_4$: \[ S=\left[ \begin{matrix}
e^{2w}&0&ze^{w}&(z-y+zw)e^{w}&q \\
0&e^{w}&we^{w}&\frac{w^2e^{w}}{2}&x\\ 0&0&e^{w}&we^{w}&y\\ 0 & 0 & 0
&e^{w}&z \\ 0&0&0&0&1 \end{matrix} \right]. \] Right-invariant
vector fields: $-2D_q,(y+z)D_q+D_z,D_y-zD_q, D_x,
D_w+2qD_q+(x+y)D_x+(y+z)D_y+zD_z.$\\

\noindent $A_{5.22}\, [e_2,e_3]=e_1$, $[e_2,e_5]=e_3$, $[e_4,e_5]=e_4$:
\[ S=\left[ \begin{matrix} e^{w} & 0&0& 0&q \\ 0 & 1
&z&\frac{z^2}{2}&x \\ 0 & 0 & 1 & z &y \\ 0 & 0 & 0 &1&w \\
0&0&0&0&1 \end{matrix} \right]. \] Right-invariant vector fields:
$D_x, D_z+yD_x+wD_y, -D_y, D_q, D_w+qD_q.$\\

\noindent $A_{5.23b}\, (b\neq 0) [e_2,e_3]=e_1, [e_1,e_5]=2e_1,
[e_2,e_5]=e_2+e_3,[e_3,e_5]=e_3,[e_4,e_5]=be_4$: \[S
=\left[\begin{matrix} e^{bw}& 0& 0& 0&q\\
 0& e^{2w}& -ze^{w}& ye^{w}& x\\
0& 0& e^{w}& we^{w}& y+zw\\ 0& 0&0& e^{w}& z\\ 0& 0& 0& 0& 1
\end{matrix} \right]. \] Right-invariant vector fields:
$-\frac{1}{2}D_x,zD_x+D_y,D_z-(y+zw)D_x-wD_y,D_q,D_w+qD_q+2xD_x+yD_y+zD_z.$

 \[S = \left[ \begin {matrix} 1&y{{\rm e}^{w}}&
\left( q-yw-2\,z
 \right) {{\rm e}^{w}}&x{{\rm e}^{2\,w}}\\ \noalign{\medskip}0&{
{\rm e}^{w}}&-w{{\rm e}^{w}}&q{{\rm e}^{2\,w}}\\
\noalign{\medskip}0&0 &{{\rm e}^{w}}&y{{\rm e}^{2\,w}}\\
\noalign{\medskip}0&0&0&{{\rm e}^{2 \,w}}\end {matrix} \right]
(b=1).\]

\noindent Right-invariant vector fields: $-2D_x, D_y+qD_x,
D_z+D_q-yD_x, \frac{1}{2}(D_q+yD_x),
-D_w+(q+y)D_q+2xD_x+yD_y+(y+z)D_z$.\\

\noindent $A_{5.24\epsilon}\,(\epsilon = \pm 1)$ $[e_2,e_3]=e_1,
[e_1,e_5]=2e_1, [e_2,e_5]=e_2+e_3, [e_3,e_5]=e_3, [e_4,e_5]=\epsilon
e_1+2e_4$:
$$S= \left[ \begin {array}{cccc} 1&q&z{{\rm e}^{q}}&x{{\rm
e}^{2\,q}} \\ \noalign{\medskip}0&1&y{{\rm e}^{q}}&
\left(\frac{y^2}{2}-\epsilon w
 \right) {{\rm e}^{2\,q}}\\ \noalign{\medskip}0&0&{{\rm e}^{q}}&y{
{\rm e}^{2\,q}}\\ \noalign{\medskip}0&0&0&{{\rm e}^{2\,q}}\end
{array} \right].$$ \noindent Right-invariant vector fields:
$D_x,D_y,-D_z-yD_x,D_w,-D_q+(2x-\frac{y^2}{2}+\epsilon
w)D_x+yD_y+(z-y)D_z+2wD_w.$\\

\noindent $A_{5.25bp}\, (b \neq 0$) $[e_2,e_3]=e_1$, $[e_1,e_5]=2pe_1$,
$[e_2,e_5]=pe_2+e_3$, $[e_3,e_5]=pe_3-e_2$, $[e_4,e_5]=be_4$:

$$S= \left[ \begin {array}{ccccc} \noalign{\medskip} {{\rm
e}^{2\,pw}}&-{{\rm e}^{pw}}(y\cos w+x\sin w) & {{\rm e}^{pw}}(x\cos
w -y\sin w)&0&z\\ \noalign{\medskip}0&{{\rm e}^{pw}}\cos w & {{\rm
e} ^{pw}}\sin w & 0 &x\\ \noalign{\medskip}0&-{{\rm e}^{pw}} \sin
 w  & {{\rm e}^{pw}}\cos w & 0 & y\\
0&0&0&e^{bw}&q\\ \noalign{\medskip}0&0&0&0&1\end {array} \right].$$

\noindent Right-invariant vector fields:  $2D_z, D_x+yD_z,
-D_y+xD_z, D_q, D_w+bqD_q+(px+y)D_x+(py-x)D_y+2pzD_z$.\\

\noindent  $A_{5.26\epsilon p} (\epsilon = \pm 1)\, [e_2,e_3]=e_1,
[e_1,e_5]=2pe_1, [e_2,e_5]=pe_2+e_3,
  [e_3,e_5]=pe_3-e_2, [e_4,e_5]=\epsilon e_1+2pe_4$:

$$S=\left[ \begin {array}{ccccc} {{\rm e}^{2\,pw}}&-{{\rm e}^{pw}}
 \left( y\cos  w  +x\sin  w   \right) &{
{\rm e}^{pw}} \left( x\cos w  -y\sin  w
 \right) &2\,\epsilon\,w{{\rm e}^{2\,pw}}&z\\ \noalign{\medskip}0&{
{\rm e}^{pw}}\cos  w  &{{\rm e}^{pw}}\sin w
 &0&x\\ \noalign{\medskip}0&-{{\rm e}^{pw}}\sin w
&{{\rm e}^{pw}}\cos  w  &0&y \\ \noalign{\medskip}0&0&0&{{\rm
e}^{2\,pw}}&q\\ \noalign{\medskip}0&0 &0&0&1\end {array} \right].$$

\noindent Right-invariant vector fields: $2D_z, D_x+yD_z, -D_y+xD_z,
D_q, D_w+(px+y)D_x+(py-x)D_y+2(\epsilon q+pz)D_z+2pqD_q.$\\

\noindent $A_{5.27} \, [e_2,e_3]=e_1, [e_1,e_5]=e_1, [e_3,e_5]=e_3+e_4,
[e_4,e_5]=e_1+e_4$:
\[S= \left[ \begin {array}{cccc} {{\rm e}^{w}}&w{{\rm e}^{w}}& \left( q+
\frac{w^2}{2} \right) {{\rm e}^{w}}&z\\ \noalign{\medskip}0&{{\rm e}^{w}}
&w{{\rm e}^{w}}&x\\ \noalign{\medskip}0&0&{{\rm e}^{w}}&y
\\ \noalign{\medskip}0&0&0&1\end {array} \right].\]
\noindent Right-invariant vector fields: $D_z, -D_q-yD_z, D_y, D_x, D_w+(x+y)D_x+yD_y+(x+z)D_z.$\\


\noindent $A_{5.28a}\, [e_2,e_3]=e_1$, $[e_1,e_5]=ae_1$,
$[e_2,e_5]=(a-1)e_2$, $[e_3,e_5]=e_3+e_4$, $[e_4,e_5]=e_4$:
$$S=\left[ \begin {array}{cccc} {{\rm e}^{-w}}&z&x{{\rm e}^{- \left(
a-1 \right) w}}&q\\ \noalign{\medskip}0&1&y{{\rm e}^{ \left( a-1
\right) w}}&w\\ \noalign{\medskip}0&0&{{\rm e}^{ \left( a-1 \right)
w}}&0 \\ \noalign{\medskip}0&0&0&1\end {array} \right].$$
\noindent Right-invariant vector fields: $e^{2(a-1)w}D_x, D_y,
ye^{(2(a-1)w}D_x+D_z+wD_q, D_q,
-D_w-(a-2)xD_x+(a-1)yD_y+zD_z+qD_q$.\\

\noindent $A_{5.29}\,[e_2,e_4]=e_1$, $[e_1,e_5]=e_1$, $[e_2,e_5]=e_2$,
$[e_4,e_5]=e_3$:
$$S= \left[ \begin {array}{cccc} 1&q&x{{\rm e}^{w}}&z \\
\noalign{\medskip}0&1&y{{\rm e}^{w}}&w\\ \noalign{\medskip}0&0&{
{\rm e}^{w}}&0\\ \noalign{\medskip}0&0&0&1\end {array} \right].$$
\noindent Right-invariant vector fields: $D_x, D_y, D_z,
D_q+yD_x+wD_z, -D_w+xD_x+yD_y.$\\

\noindent $A_{5.30a} \,  [e_2,e_4]=e_1, [e_3,e_4]=e_2$,
$[e_1,e_5]=(a+1)e_1$, $[e_2,e_5]=ae_2$, $[e_3,e_5]=(a-1)e_3$,
$[e_4,e_5]=e_4$:
$$S=\left[ \begin {array}{cccc}
1&q&\frac{q^2}{2}&x\\\noalign{\medskip}0&{
e^{w}}&{e^{w}}q&{e^{w}}y\\\noalign{\medskip}0&0&{e^{2\,w}}&{e^{2\,w}}z
\\\noalign{\medskip}0&0&0&{e^{ \left( a+1 \right) w}}\end
{array}\right].$$

\noindent Right-invariant vector fields: $e^{(a+1)w}D_x, e^{aw}D_y,
e^{(a-1)w}D_z, e^w(D_q+yD_x+zD_y), -D_w$.\\

\noindent $A_{5.31}\, [e_2,e_4]=e_1$, $[e_3,e_4]=e_2$, $[e_1,e_5]=3e_1$,
$[e_2,e_5]=2e_2$, $[e_3,e_5]=e_3$,$[e_4,e_5]=e_3+e_4$:  \[ S=\left[ \begin{matrix} e^{3w}& -ze^{2w}&
\frac{1}{2}z^2e^{w}&\frac{1}{2}e^{w}(z^2w+x-yz+\frac{3z^2}{2})& q\\
0& e^{2w}& -ze^{w}&e^{w}(y-z-zw)& x\\ 0& 0& e^{w}& we^{w}&y\\
0&0&0&e^{w}&z\\ 0&0&0&0&1 \end{matrix} \right]. \]
\noindent Right-invariant vector fields: $3D_q, -(2D_x+z D_q), D_y+zD_x, D_z -xD_q-(y+z)D_x,
D_w+3q D_q + 2xD_x+(y + z) D_y+ + zD_z$.\\

\noindent $A_{5.32a}\, [e_2,e_4]=e_1$, $[e_3,e_4]=e_2$, $[e_1,e_5]=e_1$,
$[e_2,e_5]=e_2$, $[e_3,e_5]=ae_1+e_3$:
$$S= \left[ \begin {array}{cccc} 1&q&\frac{q^2}{2}-aw&x \\
\noalign{\medskip}0&1&q&y\\ \noalign{\medskip}0&0&1&z \\
\noalign{\medskip}0&0&0&{{\rm e}^{w}}\end {array} \right].$$
\noindent Right-invariant vector fields: $e^w D_x, e^w D_y, e^w D_z,
D_q+yD_x+zD_y, -D_w+azD_x$.\\

\noindent $A_{5.33ab}\, (a^2+b^2) \neq 0$ $[e_1,e_4]=e_1$,
$[e_3,e_4]=be_3$, $[e_2,e_5]=e_2$, $[e_3,e_5]=ae_3$:
\[ S= \left[
\begin {array}{cccc} {e^{z}}&0&0&q\\\noalign{\medskip}0&{e^{w}
}&0&x\\\noalign{\medskip}0&0&e^{aw+bz}&y\\\noalign{\medskip}0&0&0&1
\end {array} \right].\]
\noindent Right-invariant vector fields:
$D_q, D_x, D_y, -(D_z+qD_q+byD_y), -(D_w+xD_x+ayD_y).$\\

\noindent $A_{5.34a}\, [e_1,e_4]=a
e_1,[e_2,e_4]=e_2,[e_3,e_4]=e_3,[e_1,e_5]=e_1,[e_3,e_5]=e_2$.
$$S=
\left[ \begin {array}{cccc} {{\rm e}^{\alpha z+w}}&0&0&q
\\\noalign{\medskip}0&{{\rm e}^{z}}&w&x\\\noalign{\medskip}0&0&{
{\rm e}^{z}}&y\\\noalign{\medskip}0&0&0&1\end {array} \right].$$
\noindent Right-invariant vector fields: $D_q, D_x, D_y,
D_z+xD_x+yD_y+aqD_q, D_w+yD_x+qD_q$.\\

\noindent $A_{5.35ab} \, (a^2+b^2 \neq 0)$ $[e_1,e_4]=b
e_1,[e_2,e_4]=e_2,[e_3,e_4]=e_3 ,[e_1,e_5]=a
e_1,[e_2,e_5]=-e_3,[e_3,e_5]=e_2$: $$S= \left[ \begin {array}{cccc}
{{\rm e}^{aw+bz}}&0&0&q \\ \noalign{\medskip}0&{{\rm e}^{z}}\cos w
&{{\rm e}^{z }}\sin w &x\\ \noalign{\medskip}0&-{{\rm e}^{z}}\sin w
&{{\rm e}^{z}}\cos w &y \\ \noalign{\medskip}0&0&0&1\end {array}
\right].$$\\
\noindent Right-invariant vector fields: $D_q, D_x, D_y,
D_z+xD_x+yD_y+bqD_q, D_w+yD_x-xD_y+aqD_q$.\\

\noindent $A_{5.36} \, [e_2,e_3]=e_1, [e_1,e_4]=e_1,
[e_2,e_4]=e_2, [e_2,e_5]=-e_2, [e_3,e_5]=e_3$:
$$S=\left[ \begin {array}{ccc} {{\rm e}^{w}}&{{\rm e}^{q}}x&z \\
\noalign{\medskip}0&{{\rm e}^{q}}&y\\ \noalign{\medskip}0&0&1 \end
{array} \right].$$
\noindent Right-invariant vector fields: $D_z,-D_x-yD_z, D_y,
Dw+xD_x+zD_z, D_q-xD_x+yD_y$.\\

\noindent $A_{5.37}\, [e_2,e_3]=e_1$, $[e_1,e_4]=2e_1$,$[e_2,e_4]=e_2$,
$[e_3,e_4]=e_3$, $[e_2,e_5]=-e_3$,$[e_3,e_5]=e_2$:
$$S= \left[ \begin {array}{cccc} {{\rm e}^{2\,q}}& \left( y\cos w
+x\sin w  \right) {{\rm e}^{q}}& \left( y\sin
 w  -x\cos  w   \right) {{\rm e}^{q}}&z
\\ \noalign{\medskip}0&{{\rm e}^{q}}\cos w & {{\rm e}^{q}} \sin
 w &x\\ \noalign{\medskip}0&-{{\rm e}^{q}}\sin w
 & {{\rm e}^{q}} \cos  w &y
\\ \noalign{\medskip}0&0&0&1\end {array} \right].$$
\noindent Right-invariant vector fields: $2D_z, D_x-yD_z, D_y+xD_z,
D_q+xD_x+yD_y+2zD_z, D_w+yD_x-xD_y$.\\

\noindent $A_{5.38}\, [e_1,e_4]=e_1$, $[e_2,e_5]=e_2$, $[e_4,e_5]=e_3$:
$$ S=\left[ \begin{matrix} e^{z} & 0&0& 0&q \\ 0 & e^{w} &0& 0&x \\
0 & 0 & 1 & w &y \\ 0 & 0 & 0 &1&z \\ 0&0&0&0&1 \end{matrix}
\right].$$  Right-invariant vector fields: $D_q, D_x, D_y, qD_q+D_z,
D_w+xD_x+zD_y$.\\

\noindent $A_{5.39}\, [e_1,e_4]=e_1$, $[e_2,e_4]=e_2$, $[e_1,e_5]=-e_2$,
 $[e_2,e_5]=e_1$, $[e_4,e_5]=e_3$:

\[ S= \left[ \begin{matrix} 1&z&0&0&q\\ \noalign{\medskip}0&1&0&0&w
\\ \noalign{\medskip}0&0&{{\rm e}^{-w}}\cos \left( z \right) &{{\rm e}
^{-w}}\sin \left( z \right) &x\\ \noalign{\medskip}0&0&-{{\rm e}^{-w}}
\sin \left( z \right) &{{\rm e}^{-w}}\cos \left( z \right) &y
\\ \noalign{\medskip}0&0&0&0&1\end{matrix} \right].\]
\noindent Right-invariant vector fields: $D_y, D_x, D_q, -D_w+yD_y+xD_x, -D_z+xD_y-yD_x-wD_q$.\\

\noindent $A_{5.40}\, [e_1,e_2]=2e_1, [e_3,e_1]=e_2, [e_2,e_3]=2e_3,
[e_1,e_4]=e_5, [e_2,e_4]=e_4, [e_2,e_5]=-e_5, [e_3,e_5]=e_4$:
\[S=\left[ \begin{matrix} e^{x} &y&w  \\ z & (1+yz)e^{-x}&q \\ 0&0&1
\end{matrix} \right]. \] Right-invariant vector fields:
$ze^{-x}D_x+(yz+1)e^{-x}D_y+qD_w,
-qD_q+D_x+yD_y-zD_z+wD_w,wD_q+e^{x}D_z, D_q, D_w$.



\section{Appendix}

\begin{proposition} Algebra $A_{5,21}$ has no representation in
$\g\l(4,\R)$. \end{proposition}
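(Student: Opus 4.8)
The plan is to assume a faithful representation $\rho\colon \g \to \g\l(4,\R)$ exists and derive a contradiction, first reducing everything to the fully triangular case of Lie's Theorem and then exploiting the Jordan structure of $\mathrm{ad}\,e_5$. Since $A_{5,21}$ is solvable, after conjugation every $\rho(e_i)$ may be assumed to have one of the five forms (i)--(v) of $(\ref{Lie})$; form (iii) is ruled out by Corollary 3.2. I would next invoke the observation of Section 4 that a five-dimensional algebra admitting a representation of type (i)--(iv) must have an adjoint matrix with non-real eigenvalues. For $A_{5,21}$ the only non-nilpotent adjoint is $\mathrm{ad}\,e_5$, whose eigenvalues are $0,-2,-1,-1,-1$, all real (indeed $A_{5,21}$ is absent from the list in Section 4 of algebras with non-real adjoint eigenvalues). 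Hence cases (i)--(iv) are impossible and we are forced into case (v): all representing matrices are upper triangular.

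The substance of the argument is the analysis of case (v). Here the nilradical $\n=[\g,\g]=\mathrm{span}(e_1,e_2,e_3,e_4)\cong H\oplus\R$ is carried by $\rho$ onto $[\rho(\g),\rho(\g)]$, a $4$-dimensional subalgebra of the \emph{strictly} upper triangular matrices. Writing $\rho(e_5)=s+n$ for its Jordan decomposition, I would conjugate within the group of upper triangular matrices so that $s=\mathrm{diag}(d_1,d_2,d_3,d_4)$, which keeps $\rho(\n)$ strictly upper triangular; then $\rho(\n)$, being $\mathrm{ad}(\rho(e_5))$-invariant, is a sum of $\mathrm{ad}(s)$-weight spaces, each spanned by matrix units $E_{ij}$ of weight $d_i-d_j$. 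Transporting the Jordan data of $\mathrm{ad}\,e_5$ through $\rho$ gives two facts: first, $\rho(e_1)=[\rho(e_2),\rho(e_3)]$ is a weight vector of weight $-2$, so some difference $d_i-d_j$ equals $-2$; second, $\rho(e_2),\rho(e_3),\rho(e_4)$ span the three-dimensional weight-$(-1)$ space $W_{-1}$, on which $\mathrm{ad}(\rho(e_5))+I$ acts as $\mathrm{ad}(n)$ and, because the chain $e_2\mapsto -e_3\mapsto e_4\mapsto 0$ shows $\mathrm{ad}\,e_5$ has a single Jordan block of size three at the eigenvalue $-1$, this action must be a size-three nilpotent Jordan block.

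The decisive step --- and the one I expect to be the main obstacle to make airtight --- is showing these two requirements are incompatible for a $4\times 4$ matrix. Since $[s,n]=0$, the nilpotent $n$ has $n_{ij}\neq 0$ only when $d_i=d_j$, so $\mathrm{ad}(n)$ shifts indices only within blocks of equal diagonal entries. A size-three Jordan block of $\mathrm{ad}(n)$ on the three-dimensional space $W_{-1}$ therefore forces either a diagonal value of multiplicity at least three or two values each of multiplicity at least two; a short check of the remaining partitions of the four diagonal entries (three or four distinct values) shows $\mathrm{ad}(n)^2$ vanishes on $W_{-1}$ in those cases. In either admissible configuration $s$ has \emph{at most two distinct eigenvalues}, whence the differences $d_i-d_j$ take at most one nonzero magnitude; as the weight $-1$ must occur, the weight $-2$ cannot, contradicting the existence of $\rho(e_1)$. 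This contradiction rules out case (v), and with cases (i)--(iv) already excluded we conclude that $A_{5,21}$ has no representation in $\g\l(4,\R)$.
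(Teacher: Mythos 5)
Your proof is correct, but it reaches the contradiction by a genuinely different route from the paper's. The paper's Appendix argument is computational: it takes $E_2$ and $E_5$ as general (strictly) upper triangular matrices, \emph{defines} $E_3=[E_2,E_5]-E_2$, $E_4=[E_3,E_5]-E_3$, $E_1=[E_2,E_3]$, and then runs a case analysis on the vanishing of the superdiagonal entries $t,p,r$ of $E_2$, in each branch forcing relations such as $e=a+1$, $h=e+1$, $j=h+1$ among the diagonal entries of $E_5$ until either $E_1=0$ or $E_1,E_3,E_4$ become linearly dependent. You instead extract the invariant content of that computation: the Jordan--Chevalley decomposition $\rho(e_5)=s+n$ turns the problem into a weight-space count, and the two structural demands of $A_{5,21}$ --- a nonzero weight $-2$ vector $\rho(e_1)$, and a size-three nilpotent Jordan block of $\mathrm{ad}(n)$ inside the weight $-1$ space --- are shown to be incompatible for a diagonal $4\times4$ matrix $s$: the second forces at most two distinct diagonal entries (your block-size count, $\mathrm{ad}(n)^2(E_{ij})=0$ unless the two distinct equal-diagonal blocks containing $i$ and $j$ have sizes summing to $4$, is sound), while the first needs two distinct nonzero weight magnitudes. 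Your argument also explains conceptually why the paper's terminal case lands on the arithmetic progression $a,a+1,a+2,a+3$ of diagonal entries. The one step both treatments share, and which you state more explicitly than the paper does, is the reduction to the fully triangular case (v) via the Section 4 claim that representations of types (i)--(iv) force a non-real adjoint eigenvalue; your proof inherits whatever burden that claim carries, exactly as the paper's does. On balance your version is shorter, cleaner, and more reusable for other algebras whose $\mathrm{ad}\,e_5$ combines an isolated eigenvalue with a single large Jordan block.
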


\begin{proof} First of all assume that there is a upper triangular representation. Since each of $e_1,e_2,e_3,e_4$ is a sum of
commutators we may assume that each of $E_1,E_2,E_3,E_4$ are strictly upper triangular. Now put

$$E_2=\left[\begin{smallmatrix}0&t&u&v\\ \noalign{\medskip}0&0&p&q\\
\noalign{\medskip}0&0&0&r\\ \noalign{\medskip}0&0&0&0\end
{smallmatrix}
 \right],
E_5=\left[ \begin{smallmatrix}a&b&c&d\\ \noalign{\medskip}0&e&f&g\\
\noalign{\medskip}0&0&h&i\\ \noalign{\medskip}0&0&0&j\end
{smallmatrix}
 \right]$$

\noindent and \emph{define} $E_3=[E_2,E_5]-E_2, E_4=[E_3,E_5]-E_3,
E_1=[E_2,E_3]$. Then

$$E_1 = \left[ \begin {smallmatrix}0&0&tp \left( h-2\,e+a \right)
&tpi+tqj-2 \,teq-2\,tfr+urj-2\,urh+tqa+rau+rbp\\
\noalign{\medskip}0&0&0&pr
 \left( j-2\,h+e \right) \\ \noalign{\medskip}0&0&0&0
\\ \noalign{\medskip}0&0&0&0\end {smallmatrix} \right]$$

$$E_3=\left[ \begin {smallmatrix} 0&t \left(e-a-1 \right)
&tf+uh-au-bp-u &tg+ui+vj-av-bq-cr-v\\ \noalign{\medskip}0&0&p
\left(h-e-1 \right) &pi+qj-eq-fr-q\\ \noalign{\medskip}0&0&0&r
\left(j-h-1 \right) \\ \noalign{\medskip}0&0&0&0\end {smallmatrix}
\right], E_4= \left[ \begin {smallmatrix} 0&t \left(a-e+1 \right)
^{2}& * 
& 
\,* \\ \noalign{\medskip}0&0&p \left( -h+e+1 \right) ^{2}&
*\\
\noalign{\medskip}0&0&0&r \left( -j+h+1 \right) ^{2} \\
\noalign{\medskip}0&0&0&0\end{smallmatrix}\right].$$

It turns out that $[E_1,E_5]=[E_2,E_4]$, the equality of
commutators. Furthermore we have that $[E_2,E_5]=E_2+E_3,
[E_3,E_5]=E_3+E_4, [E_2,E_3]=E_1$ by construction. The remaining
brackets are given by
$$[E_1,E_2]= \left[ \begin {smallmatrix} 0&0&0&tpr \left(
3\,h-3\,e+a-j \right) \\ \noalign{\medskip}0&0&0&0\\
\noalign{\medskip}0&0&0&0 \\ \noalign{\medskip}0&0&0&0\end
{smallmatrix} \right], [E_1,E_3]=\left[ \begin {smallmatrix}
0&0&0&tpr \left( jh-{h}^{2}-3\,h-3\,je+4
\,he+3\,e+2\,ja-3\,ha-a-{e}^{2}+ea+j \right) \\
\noalign{\medskip}0&0&0 &0\\ \noalign{\medskip}0&0&0&0\\
\noalign{\medskip}0&0&0&0\end {smallmatrix} \right],$$

$$[E_1,E_4]=  \left[ \begin {smallmatrix} 0&0&0&*\\
 \noalign{\medskip}0&0&0&0\\
\noalign{\medskip}0&0 &0&0\\ \noalign{\medskip}0&0&0&0\end
{smallmatrix} \right], [E_1,E_5]-2E_5= \left[ \begin {smallmatrix}
0&0&tp \left( h-a-2 \right) \left( h-2 \,e+a \right)
&*\\
\noalign{\medskip}0&0&0&pr \left( j-e-2 \right)  \left( j-2\,h+e
 \right) \\ \noalign{\medskip}0&0&0&0\\ \noalign{\medskip}0&0&0&0
\end {smallmatrix} \right],$$

$$[E_1,E_4]=  \left[ \begin {smallmatrix} 0&0&0&*\\
 \noalign{\medskip}0&0&0&0\\
\noalign{\medskip}0&0 &0&0\\ \noalign{\medskip}0&0&0&0\end
{smallmatrix} \right], [E_3,E_4]=  \left[ \begin {smallmatrix} 0&0&tp \left( h-e-1
\right) \left( e-a -1 \right)  \left( h-2\,e+a \right)
&*\\
\noalign{\medskip}0&0&0&pr \left( j-h-1 \right)  \left( h-e-1
 \right)  \left( j-2\,h+e \right) \\ \noalign{\medskip}0&0&0&0
\\ \noalign{\medskip}0&0&0&0\end {smallmatrix} \right],$$

$$[E_4,E_5]-E_4=   \left[ \begin {smallmatrix} 0&t \left( e-a-1
\right) ^{3}&*&*\\
\noalign{\medskip}0&0&p \left( h-e-1 \right)
^{3}&*\\
\noalign{\medskip}0&0&0&r \left( j-h-1
 \right) ^{3}\\ \noalign{\medskip}0&0&0&0\end {smallmatrix} \right].$$

Looking at the $[E_1,E_2]$ bracket we see that $prt(3h-3e+a-j)=0$.
Suppose first of all that $r=0$. Then $t\neq0$ or else $E_1=0$. From
the $(1,2)$-entry in the $[E_4,E_5]$ bracket we deduce that $e=a+1$.
Now it must be that $p\neq0$ or else $[E_1,E_5]=2E_5$ implies that
$E_1=0$. Now from the $(1,3)$-entry in the $[E_1,E_5]$ bracket we
deduce that $h=a+2$. From the $(1,3)$-entry in the $[E_4,E_5]$
bracket we conclude that $u=bp-ft$. However, we now have a
contradiction because $E_1$ and $E_4$ are proportional.

Hence we may assume that $r\neq0$ and by appealing to Corollary 3.2
that also $t\neq0$ and hence $rt\neq0$. Now suppose that $p=0$. Then
comparing $E_1$ and the $[E_1,E_5]$ bracket we find that
$j=a+2$. From the $(1,2)$ and $(4,5)$-entries in the $[E_4,E_5]$
bracket we find that $e=a+1, h=a+1$ which is a  contradiction
because now $E_1$ and $E_4$ are proportional.

Hence we assume that $prt\neq0$. Then from the
$(1,2), (2,3)$ and $(4,5)$-entries in the $[E_4,E_5]$ bracket we find that
$e=a+1,h=e+1,j=h+1$ which is a  contradiction because now $E_1,E_3$ and
$E_4$ are linearly dependent. Hence there can be no representation
of $A_{5,21}$ in $\g\l(4,\R)$.\end{proof}

\begin{proposition} For algebra $A_{5,22}$ we have $\mu=5$.
\end{proposition}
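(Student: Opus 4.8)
The plan is to show that neither $\mu=3$ nor $\mu=4$ is possible; since $\mu\le 5$ by \cite{GST}, this forces $\mu=5$. As $A_{5,22}$ is solvable and indecomposable, Corollary 5.2 shows it cannot be represented in $\g\l(3,\R)$, so $\mu\ne 3$. To rule out $\mu=4$ I would invoke the extension of Lie's Theorem: any representation of the solvable algebra $A_{5,22}$ in $\g\l(4,\R)$ is conjugate to one of the five forms in \ref{Lie}. A direct computation of the adjoint matrices from the brackets $[e_2,e_3]=e_1$, $[e_2,e_5]=e_3$, $[e_4,e_5]=e_4$ shows that every one has only real eigenvalues; equivalently, $A_{5,22}$ is not among the algebras $A_{5,25},A_{5,26},A_{5,35},A_{5,37},A_{5,39}$ singled out in Section 4. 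Hence, by the remark there, cases (i)--(iv) are impossible and only the fully upper triangular case (v) can occur. Everything therefore reduces to showing that $A_{5,22}$ has no \emph{upper triangular} representation in $\g\l(4,\R)$.

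For case (v) I would argue exactly as in the preceding proposition for $A_{5,21}$. Since $[\g,\g]=\langle e_1,e_3,e_4\rangle$ is three-dimensional, Lie's Theorem forces $E_1,E_3,E_4$ to be strictly upper triangular; moreover $e_1$ spans the centre of $A_{5,22}$, so $E_1$ must commute with each of $E_2,\dots,E_5$. I would take $E_2$ and $E_5$ to be general upper triangular matrices with diagonals $\alpha=(\alpha_1,\dots,\alpha_4)$ and $\beta=(\beta_1,\dots,\beta_4)$, then \emph{define} $E_3=[E_2,E_5]$ and $E_1=[E_2,E_3]$, leaving $E_4$ as a further strictly upper triangular unknown. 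The defining relations still to be imposed are $[E_3,E_5]=0$, $[E_4,E_5]=E_4$, the commuting relations $[E_1,E_i]=0$ and $[E_4,E_2]=[E_4,E_3]=0$, together with the faithfulness requirement that $E_1,\dots,E_5$ be linearly independent.

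The heart of the argument, and the main obstacle, is the superdiagonal bookkeeping that combines these relations. The diagonals of the representing matrices define characters of $\g$ that factor through $\g/[\g,\g]=\langle e_2,e_5\rangle$, and the relation $[E_4,E_5]=E_4$, read off on the lowest superdiagonal on which $E_4$ is nonzero, forces a consecutive difference $\beta_{i+1}-\beta_i=1$; thus $E_5$ must have genuinely separated diagonal entries. On the other hand $[e_2,e_5]=e_3$ and $[e_3,e_5]=0$ say that $\mathrm{ad}\,E_5$ acts nilpotently on $\langle E_1,E_2,E_3\rangle$, so the Heisenberg ideal $\langle e_1,e_2,e_3\rangle$ must be faithfully represented (in particular $E_1=[E_2,E_3]\neq 0$) inside the $\mathrm{ad}\,E_5$-nilpotent part, while $E_4$ lies in a different $\mathrm{ad}\,E_5$-eigenspace and yet commutes with all of $E_1,E_2,E_3$. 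I expect the computation to show that the centraliser of a faithfully represented Heisenberg subalgebra inside the strictly upper triangular $4\times 4$ matrices is too small to accommodate an independent $E_4$ satisfying $[E_4,E_5]=E_4$: every admissible choice of $\alpha,\beta$ and of the nilpotent parts forces a linear dependence among $E_1,\dots,E_5$, i.e.\ a loss of faithfulness. Eliminating each of the resulting subcases is the laborious part, entirely parallel to the entry-by-entry analysis carried out for $A_{5,21}$, and it yields the contradiction that establishes $\mu=5$.
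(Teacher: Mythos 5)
Your reduction to the upper triangular case is sound and consistent with what the paper does: $\mu\neq 3$ follows from Corollary 5.2, and cases (i)--(iv) of the real form of Lie's Theorem are excluded because every adjoint matrix of $A_{5,22}$ has real eigenvalues (indeed the paper's appendix proof simply starts from an upper triangular representation). Your identification of the constraints --- $E_1,E_3,E_4$ strictly upper triangular since $[\g,\g]=\langle e_1,e_3,e_4\rangle$, $E_3=[E_2,E_5]$ and $E_1=[E_2,E_3]$ determined by $E_2$ and $E_5$, and the remaining brackets to be imposed --- also matches the paper's setup.

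The problem is that the proof stops exactly where the mathematical content begins. The entire burden of the proposition is the case-by-case elimination, and you replace it with ``I expect the computation to show\dots'' and ``eliminating each of the resulting subcases is the laborious part.'' That is a statement of intent, not a proof; nothing you have written rules out the possibility that some choice of $E_2,E_4,E_5$ survives all the constraints. You also never specify what the subcases actually are, whereas the paper makes this concrete: using Corollary 3.2 it reduces $E_4$ to one of seven normal forms for a nonzero strictly upper triangular $4\times4$ matrix under upper-triangular conjugation, and in each of the seven cases derives from $[E_2,E_4]=0$, $[E_4,E_5]=E_4$, $[E_1,E_2]=0$ and $[E_3,E_5]=0$ a specific degeneracy ($E_1=0$, $E_3=0$, or a proportionality among $E_1,E_3,E_4$) that contradicts faithfulness. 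Two further imprecisions: the relation $[E_4,E_5]=E_4$ read on the lowest nonzero superdiagonal of $E_4$ gives $\beta_j-\beta_i=1$ for the positions $(i,j)$ actually occupied, not necessarily a consecutive difference $\beta_{i+1}-\beta_i=1$ (e.g.\ $E_4$ supported only at $(1,4)$); and your guiding heuristic about the centraliser of the Heisenberg subalgebra being ``too small for $E_4$'' does not reflect how the contradictions actually arise --- in several of the paper's cases the failure is that $E_1$ or $E_3$ collapses, not that $E_4$ cannot be fitted in.
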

We assume that algebra $A_{5,22}$
has an upper triangular representation. In fact we my assume that $E_2$ and $E_5$ are upper triangular and that $E_4$ is strictly upper triangular, $E_1$ and $E_3$ being determined by the brackets that define the algebra. We shall write

$$E_2=\left[\begin{smallmatrix}\alpha&\beta&\delta&\rho\\ \noalign{\medskip}\alpha&\beta&\delta&\rho\\
\noalign{\medskip}0&\lambda&\sigma&\tau\\ \noalign{\medskip}0&0&\phi&\mu\end{smallmatrix}\right],
E_5=\left[\begin{smallmatrix}c&d&e&f\\ \noalign{\medskip}0&g&h&i\\
\noalign{\medskip}0&0&j&k\\ \noalign{\medskip}0&0&0&m\end
{smallmatrix}
\right]$$
\noindent From the $(1,2),(2,3),(3,4)$-entries of $[E_1,E_2]=0$ it follows that the $(1,2),(2,3),(3,4)$-entries of $E_1$ are zero. From $[E_2,E_3]=E_1$ and $[E_3,E_5]=0$ it follows that the $(1,2),(2,3),(3,4)$-entries of $E_3$ are zero.
Invoking Corollary 3.2 we make an upper-triangular transformation so as to
reduce $E_4$ to one of the following seven forms and work through each of these seven cases in turn:
\[ \begin{array}{lllllll} & \left[\begin{smallmatrix} 0&1&0&0\\
0&0&1&0\\ 0&0&0&1\\ 0&0&0&0 \end{smallmatrix}\right],
\left[\begin{smallmatrix} 0&1&0&0\\ 0&0&1&0\\ 0&0&0&0\\ 0&0&0&0
\end{smallmatrix}\right] ,\,\left[\begin{smallmatrix} 0&1&0&0\\
0&0&0&0\\ 0&0&0&1\\ 0&0&0&0 \end{smallmatrix}\right], \left[\begin{smallmatrix} 0&0&1&0\\
0&0&0&1\\ 0&0&0&0\\ 0&0&0&0 \end{smallmatrix}\right] , \left[\begin{smallmatrix} 0&0&1&0\\
0&0&0&0\\ 0&0&0&0\\ 0&0&0&0 \end{smallmatrix}\right] \left[\begin{smallmatrix} 0&0&0&0\\
0&0&1&0\\ 0&0&0&0\\ 0&0&0&0 \end{smallmatrix}\right]
, \left[\begin{smallmatrix} 0&0&0&1\\
0&0&0&0\\ 0&0&0&0\\ 0&0&0&0 \end{smallmatrix}\right].
\end{array} \]

In the interests of saving space we do not write out explicitly the very complicated matrices concerned.  
In the first case from $[E_2,E_4]=0$ we find that
$\phi=\alpha, \sigma=\beta, \tau=\delta, \lambda=\alpha, \mu=\beta, \nu=\alpha$ and from $[E_4,E_5]=E_4$ that
$g=c+1, h=d, i=e, j=g+1, k=h, t=j+1$. Now, however, $[E_3,E_5]=0$ implies that $E_3=0$.

In the second case from $[E_2,E_4]=0$ we find that
$\phi=\alpha, \sigma=\beta, \lambda=\alpha, \tau=0, \mu=0$ and from $[E_4,E_5]=E_4$ that
$g=c+1, h=d, i=0, j=g+1, k=0$. Now, however, $[E_1,E_2]=0$ implies that $E_1=0$.

In the third case from $[E_2,E_4]=0$ we find that
$\phi=\alpha, \sigma=0, \tau=\delta, \nu=\lambda$ and from $[E_4,E_5]=E_4$ that
$g=c+1, h=0, i=e, t=j+1$. The $(1,3)$ entries of $[E_1,E_2]=0$
$[E_3,E_5]=0$ imply that $e\alpha+j\delta-c\delta-e\nu=0$ and hence $E_1$ and $E_3$ are proportional.

In the fourth case from $[E_2,E_4]=0$ we find that
$\lambda=\alpha, \mu=\beta, \nu=\phi$ and from $[E_4,E_5]=E_4$ that
$j=c+1, k=d, t=g+1$. Now the only non-zero entry of the commutator $[E_2,E_3]$ is in the $(1,4)$ position.
On the hand the $(1,3)$ and $(2,4)$ entries of
$[E_3,E_5]$ imply that $h\beta+\delta-d\sigma=0$ and $d\sigma+\tau-h\beta=0$ and hence the only non-zero entry of $E_3$ is in the $(1,4)$ position. Thus if also  $[E_2,E_3]=E_1$ then $E_1$ and $E_3$ would be proportional.


In the fifth case from $[E_2,E_4]=0$ we find that $\lambda=\alpha, \mu=0$ and from $[E_4,E_5]=E_4$ that $j=c+1, k=0$. The $(1,3)$-entry of $[E_3,E_5]=0$ implies that the $(1,3)$-entry of $E_3$ is zero and $(2,4)$-entries of $[E_1,E_2]=0$ and $[E_3,E_5]=0$ that the $(2,4)$-entry of $E_3$ is zero. Now $[E_2,E_3]=E_1$ gives that $E_1$ and $E_3$ are proportional.

In the sixth case from $[E_2,E_4]=0$ we find that $\phi=\lambda, \beta=0, \mu=0$ and from $[E_4,E_5]=E_4$ that $d=0, j=g+1, k=0$. Now, however, $[E_1,E_2]=0$ immediately implies that $E_1=0$.

In the seventh case from $[E_2,E_4]=0$ we find that $\nu=\alpha$ and from $[E_4,E_5]=E_4$ that $t=c+1$.
Then $[E_2,E_3]=E_1$ implies that the $(1,3)$ entry of  $E_1$ is zero. If $(\alpha-\lambda)(\alpha-\phi)\neq0$ then comparing $E_1$ and $[E_1,E_2]=0$ implies that $E_1$ and $E_4$ are proportional. Hence by appealing to Corollary 3.2 we may assume that
$\lambda=\alpha$. Again comparing $E_1$ and $[E_1,E_2]$ we find that
$\phi=\alpha$ in order not to have $E_1$ and $E_4$ proportional. Now, however, $[E_2,E_3]=E_1$ gives that $E_1$ and $E_4$ are proportional.


\begin{proposition} $\mu(\g)=5$ for algebra $A_{5,23}^b$ for $b\neq 1$
and $\mu(\g)=4$ for $b=1$. \end{proposition}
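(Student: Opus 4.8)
The plan is to dispose of $b=1$ at once and then to rule out a four-dimensional representation when $b\neq1$. For $b=1$ the matrix group displayed for $A_{5,23}^{b=1}$ in Section 8 is a faithful representation in $\g\l(4,\R)$, so $\mu\leq4$; as $A_{5,23}^b$ is neither $A_{5,36}$ nor $A_{5,37}$, Corollary 5.2 forbids a representation in $\g\l(3,\R)$, and hence $\mu=4$. For $b\neq1$ we already know $\mu\leq5$ from \cite{GST}, so everything reduces to showing that $A_{5,23}^b$ admits no faithful representation in $\g\l(4,\R)$.

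I would first argue that such a representation, if it existed, would have to be upper triangular. The restriction of $ad\,e_5$ to the nilradical has the real eigenvalues $-2,-1,-1,-b$, with a single Jordan block at $-1$, so all adjoint matrices of $A_{5,23}^b$ have real spectrum; the algebra is therefore not of $CR$-type, and by the discussion of Section 4 none of the forms (i)--(iv) can occur, leaving only the upper-triangular form (v). Moreover, since $b\neq0$ the derived algebra $[\g,\g]$ is the whole four-dimensional nilradical, so each of $E_1,E_2,E_3,E_4$ is a sum of commutators and may be taken strictly upper triangular, while $E_5$ is upper triangular.

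The decisive observation is that the subalgebra $\langle e_1,e_2,e_3,e_5\rangle$ is isomorphic to $A_{4,7}$ and involves no parameter. Parametrising $E_2$ and $E_5$ as upper-triangular matrices, setting $E_3=[E_2,E_5]-E_2$ and $E_1=[E_2,E_3]$, and imposing the remaining relations $[E_3,E_5]=E_3$, $[E_1,E_5]=2E_1$, $[E_1,E_2]=[E_1,E_3]=0$, I would show --- by the same combination of Corollary 3.2 and block-diagonal conjugation used in the preceding propositions --- that up to conjugation and scaling the only possibility is $E_5=\mathrm{diag}(-2,-1,-1,0)-e_{23}$ together with $E_1=e_{14}$, $E_2=-e_{12}+e_{34}$, $E_3=e_{13}+e_{24}$, where $e_{ij}$ denotes the matrix unit with a single $1$ in the $(i,j)$ entry. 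The main obstacle is precisely this rigidity statement: one must eliminate every degenerate placement of the Heisenberg triple $\{E_1,E_2,E_3\}$, and here the useful fact is that $A_{4,7}$ has no representation in $\g\l(3,\R)$ --- its derived algebra is the three-dimensional Heisenberg algebra, which cannot carry the required Jordan block while $E_5$ is diagonalisable --- so the triple must occupy all four rows and columns and the placement is forced.

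Once the subalgebra is pinned down, the generator $E_4$ must be a strictly upper-triangular matrix that commutes with $E_1,E_2,E_3$, is independent of them, and satisfies $[E_4,E_5]=bE_4$. I would finish by computing the centraliser of $\{E_1,E_2,E_3\}$ inside the strictly upper-triangular matrices; a direct calculation gives the three-dimensional space spanned by $e_{14}$, $e_{12}+e_{34}$ and $e_{13}-e_{24}$. Restricting $[\,\cdot\,,E_5]$ to this space, one finds that its only eigenvectors are $e_{14}$, with eigenvalue $2$ (this is $E_1$), and $e_{13}-e_{24}$, with eigenvalue $1$, the pair $e_{12}+e_{34}$, $e_{13}-e_{24}$ spanning a single Jordan block at eigenvalue $1$. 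Consequently the only admissible $E_4$ that is independent of $E_1$ is a multiple of $e_{13}-e_{24}$, which forces $b=1$. For $b\neq1$ this is a contradiction, so no representation in $\g\l(4,\R)$ exists and $\mu=5$.
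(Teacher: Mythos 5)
Your handling of $b=1$ is fine (the Section 8 matrix group plus the $\g\l(3,\R)$ exclusion), and your strategy for $b\neq1$ is genuinely different from the paper's: the paper parametrises $E_2,E_4,E_5$ as upper-triangular matrices, defines $E_3=[E_2,E_5]-E_2$ and $E_1=[E_2,E_3]$, and eliminates case by case on individual matrix entries, whereas you isolate the $A_{4,7}$-subalgebra $\langle e_1,e_2,e_3,e_5\rangle$, claim its $4\times4$ upper-triangular representation is rigid, and then finish with a centraliser computation. That last step is correct and attractive: the centraliser of $\{e_{14},\,-e_{12}+e_{34},\,e_{13}+e_{24}\}$ in $\n(4,\R)$ is indeed $\mathrm{span}\{e_{14},\,e_{12}+e_{34},\,e_{13}-e_{24}\}$, and $X\mapsto[X,E_5]$ acts on it with eigenvalues $2,1,1$ and only the two eigenvectors you name, which forces $b\in\{1,2\}$ and, for $b=2$, makes $E_4$ proportional to $E_1$. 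If the rigidity statement were in hand, this would close the argument.

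The gap is precisely the rigidity statement, which you flag but do not establish, and the justification you offer for it does not work. The claim that ``$A_{4,7}$ has no representation in $\g\l(3,\R)$, so the triple must occupy all four rows and columns and the placement is forced'' does not rule out the degenerate placements of $E_1$: a rank-one nilpotent $E_1$ in $\n(4,\R)$ has, up to conjugation and Corollary 3.2, the two normal forms $e_{14}$ and $e_{13}$, and in the second case the centraliser of $E_1$ in $\n(4,\R)$ is the five-dimensional space $\mathrm{span}\{e_{12},e_{13},e_{14},e_{23},e_{24}\}$, which is \emph{not} contained in any $3\times3$ corner (it contains $e_{24}$), so non-representability in $\g\l(3,\R)$ gives no contradiction. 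That placement can in fact be eliminated, but only by a computation you have not done: with $E_1=e_{13}$ one is forced to $\mathrm{diag}(E_5)=(d,d+1,d+2,*)$, and then the condition $[E_2,E_5]=E_2+E_3$ forces the $e_{12}$- and $e_{23}$-components of $E_3$ to vanish, which kills the $e_{13}$-component of $[E_2,E_3]$ and contradicts $[E_2,E_3]=E_1$. Similarly, even for $E_1=e_{14}$ you must pin down the diagonal weight pattern of $E_5$ (e.g.\ exclude $(0,1,2,3)$, $(0,0,1,2)$, etc.) before you can assert that $E_2,E_3$ live in $\mathrm{span}\{e_{12},e_{13},e_{24},e_{34}\}$ and that the representation is the one you wrote down. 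None of this is unmanageable, but it is exactly the case analysis that the paper's proof carries out by brute force on the entries, and without it your argument is not complete.
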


We define
$$E_2= \left[ \begin {smallmatrix} 0&\beta&\delta&\rho\\ \noalign{\medskip}0
&0&\sigma&\tau\\ \noalign{\medskip}0&0&0&\mu\\ \noalign{\medskip}0&0&0
&0\end {smallmatrix} \right],
E_4= \left[ \begin{smallmatrix} 0&m&n&p\\ \noalign{\medskip}0&0&q&r
\\ \noalign{\medskip}0&0&0&s\\ \noalign{\medskip}0&0&0&0\end {smallmatrix}
 \right],
E_5= \left[ \begin {smallmatrix} c&d&e&f\\ \noalign{\medskip}0&g&h&i
\\ \noalign{\medskip}0&0&j&k\\ \noalign{\medskip}0&0&0&t\end {smallmatrix}
 \right]$$
and then $E_3$ is defined as $[E_2,E_5]-E_2$ and $E_1$ as $[E_2,E_3]$ giving

$$E_3= \left[ \begin {smallmatrix} 0&-\beta\, \left( -g+c+1 \right) &\beta\,
h+\delta\,j-c\delta-d\sigma-\delta&\beta\,i+\delta\,k+\rho\,t-c\rho-d
\tau-e\mu-\rho\\ \noalign{\medskip}0&0&-\sigma\, \left( -j+g+1
 \right) &\sigma\,k+\tau\,t-g\tau-h\mu-\tau\\ \noalign{\medskip}0&0&0&
-\mu\, \left( -t+j+1 \right) \\ \noalign{\medskip}0&0&0&0\end {smallmatrix}
 \right]$$

$$E_1= \left[ \begin {smallmatrix} 0&0&\beta\,\sigma\, \left( j-2\,g+c
 \right) &\beta\,\sigma\,k+\beta\,\tau\,t-2\,\beta\,g\tau-2\,\beta\,h
\mu+\delta\,\mu\,t-2\,\delta\,\mu\,j+\beta\,\tau\,c+\mu\,c\delta+\mu\,
d\sigma\\ \noalign{\medskip}0&0&0&\sigma\,\mu\, \left( t-2\,j+g
 \right) \\ \noalign{\medskip}0&0&0&0\\ \noalign{\medskip}0&0&0&0
\end {smallmatrix} \right].$$

To begin with we consider the $(1,2)$-entry of $[E_3,E_5]=E_3$ which gives $\beta(g-c-1)=0$. We separate cases according as first of all $\beta=0$ and then  $g-c-1=0$. Looking at the $(3,4)$-entry of $[E_3,E_5]=E_3$ we must have that $\mu\neq0$ or $E_1=0$ and so $t=j+1$. Next, from $[E_4,E_5]=bE_4$, assuming that $b\neq1$ we find that $s=0$. From the $(2,4)$-entry of $[E_2,E_4]=0$ we find that $q=0$. Now comparing $E_1$ and $[E_1,E_5]-2E_1$ we find that $j=c+1$ in order not to have $E_1=0$. Next from the $(1,3)$ and $(1,4)$-entries of $[E_2,E_4]=0$, respectively, we have $\sigma\neq0$ or $\mu\neq0$ or else $E_1=0$. From the $(2,3)$, $(2,4)$ and $(1,4)$-entries of $[E_3,E_5]=E_3$, respectively, we have that $g=c$, $\tau=h\mu-\sigma k$ and $\rho=dh\mu-\delta k+e\mu)$.
Looking at the $(2,4)$-entry of $[E_4,E_5]=bE_4$ we have $b=2$ and $r\neq0$ or else $E_1$ and $E_4$ are proportional. Now from the $(1,4)$-entry of $[E_4,E_5]=bE_4$ we have that $d=0$ and hence $E_3=0$.

Now we consider the second case where $g=c+1$ and $\beta\neq0$. From the $(1,2)$-entry of $[E_4,E_5]-bE_4=0$ we find that $m=0$ and from the $(1,3)$-entry of $[E_2,E_4]=0$ we find that $q=0$. Comparing $E_1$ and $[E_1,E_5]-2E_1$ we find that $t=c+2$ in order not to have $E_1=0$. Now not both $\mu$ and $\sigma$ can be zero or else $E_1=0$ so we distinguish subcases according as $\sigma$ and $\mu$ zero. So if $\sigma=0$ and $\mu\neq0$ the $(1,2)$ and $(3,4)$-entries of $[E_4,E_5]=bE_4$, respectively, give that $n=s=0$, since we are assuming that $b\neq1$. Finally the $(1,4)$-entry of $[E_2,E_4]$ implies that $r$ and hence  $E_1$ and $E_4$ are proportional. On the other hand if $\mu=0$ and $\sigma\neq0$ then from the $(1,3)$-entry of $[E_2,E_3]-E_1=0$ we find that $j=c+2$ and from the $(2,4)$-entry of $[E_2,E_4]=0$ that $s=0$. Next the $(2,4)$-entry of $[E_4,E_5]-bE_4=0$ that $r=0$, from the $(1,3)$-entry of $[E_3,E_5]-E_3=0$ that $\delta=d\sigma-\beta h$ from the $(1,4)$-entry of $[E_3,E_5]-E_3=0$ that $\rho=(k-i)\beta h+d\tau$. From the $(1,3)$-entry of $[E_4,E_5]-bE_4=0$ we find that $b=2$ and $n\neq0$ or else $E_1$ and $E_4$ are proportional. Finally the $(1,4)$-entry of $[E_4,E_5]-bE_4=0$ implies that $k=0$ which gives that $E_1=0$.




\begin{proposition} Algebra $A_{5,31}$ has no representation in
$\g\l(4,\R)$. \end{proposition}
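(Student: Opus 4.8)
The plan is to follow the template established for $A_{5,21}$ and $A_{5,22}$. First I would reduce to the upper-triangular case. Every adjoint matrix of $A_{5,31}$ has real eigenvalues: $ad\,e_1,\dots,ad\,e_4$ are nilpotent, while $ad\,e_5$ has the real eigenvalues $0,-1,-1,-2,-3$. Hence, by the analysis of Section 4, a representation of one of the types (i)--(iv) of \ref{Lie}, which would require a pair of non-real conjugate eigenvalues, cannot occur, and only the upper-triangular form (v) need be examined. Moreover $[\g,\g]=\langle e_1,e_2,e_3,e_4\rangle$ --- indeed $e_3=[e_3,e_5]$, $e_4=[e_4,e_5]-e_3$, $e_2=[e_3,e_4]$ and $e_1=[e_2,e_4]$ --- so the representing matrices $E_1,E_2,E_3,E_4$ may be taken strictly upper triangular.

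Next I would take the strictly-upper-triangular matrix $E_4$ and the upper-triangular matrix $E_5$ as the two free matrices, writing them with general entries, and \emph{define} the remaining generators along the defining chain
$$E_3=[E_4,E_5]-E_4,\qquad E_2=[E_3,E_4],\qquad E_1=[E_2,E_4],$$
so that the brackets $[E_4,E_5]=E_3+E_4$, $[E_3,E_4]=E_2$ and $[E_2,E_4]=E_1$ hold by construction; this also yields the closed form $E_1=(ad\,E_4)^3E_5$. The surviving structure relations --- the weight equations $[E_1,E_5]=3E_1$, $[E_2,E_5]=2E_2$, $[E_3,E_5]=E_3$ together with the vanishing brackets $[E_1,E_4]=[E_1,E_2]=[E_1,E_3]=[E_2,E_3]=0$ inside the nilradical --- then become polynomial equations in the entries of $E_4$ and $E_5$, which I would read off one matrix entry at a time. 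As in the earlier propositions, Corollary 3.2 (transposition about the anti-diagonal) can be invoked to suppress roughly half of the subcases, and the analysis then splits according to which off-diagonal entries of $E_4$ vanish.

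The main obstacle is a dimensional tension that I expect to collapse every branch. The relation $[e_4,e_5]=e_3+e_4$ forces $ad\,E_5$ to carry a genuine $2\times2$ Jordan block on $\langle E_3,E_4\rangle$ at the eigenvalue $-1$, so $E_5$ cannot be diagonalisable; simultaneously $E_5$ must realise the three distinct weights $3,2,1$ on $E_1,E_2,E_3$, while $E_4$ must act as a three-step nilpotent chain $E_3\mapsto E_2\mapsto E_1$ with $(ad\,E_4)^3E_5\neq0$. A $4\times4$ upper-triangular frame does not contain enough independent entry positions of the required weights to sustain all of this while keeping $E_1,E_2,E_3,E_4$ linearly independent. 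Concretely I anticipate that each branch of the case analysis is driven to one of exactly the two contradictions already met for $A_{5,21}$: either $E_1=0$, or two of $E_1,E_2,E_3,E_4$ become proportional, so that the putative faithful representation degenerates. Verifying that this collapse is forced in every branch is the one genuinely laborious step; once it is carried out, $A_{5,31}$ has no representation in $\g\l(4,\R)$.
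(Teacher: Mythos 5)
Your setup coincides exactly with the paper's: reduce to the upper-triangular case (v), take $E_4$ strictly upper triangular and $E_5$ upper triangular with free entries, and define $E_3=[E_4,E_5]-E_4$, $E_2=[E_3,E_4]$, $E_1=[E_2,E_4]$ so that three brackets hold by construction and $[E_1,E_2]=[E_1,E_3]=[E_1,E_4]=0$ identically. But the proposal stops exactly where the proof begins: everything after that is an expectation (``I anticipate that each branch \dots is driven to one of exactly the two contradictions'') rather than an argument. That is a genuine gap --- you have not exhibited the contradiction, only predicted one.

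Moreover, the shape you predict (a branching case analysis on which entries of $E_4$ vanish, with Corollary 3.2 suppressing half the subcases, as for $A_{5,21}$ and $A_{5,22}$) is not what happens; the actual argument is a single direct computation with no cases at all. Writing the superdiagonal entries of $E_4$ as $t,p,r$ and the diagonal of $E_5$ as $a,e,h,j$, one finds that $E_1$ has a \emph{single} potentially nonzero entry, namely the $(1,4)$-entry $tpr(3e-a-3h+j)$; faithfulness therefore forces $tpr\neq0$ at the outset, so no vanishing pattern for $E_4$ ever needs to be considered. Then the $(1,2)$, $(2,3)$ and $(3,4)$ entries of $[E_3,E_5]-E_3$ are the perfect squares $t(e-a-1)^2$, $p(h-e-1)^2$, $r(j-h-1)^2$, whose vanishing (given $tpr\neq0$) forces $e=a+1$, $h=e+1$, $j=h+1$; substituting back gives $3e-a-3h+j=0$ and hence $E_1=0$, the desired contradiction. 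To complete your proof you need to carry out precisely this computation (or an equivalent one); the ``dimensional tension'' heuristic about Jordan blocks and weights does not by itself rule out a representation.
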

\begin{proof} Put $$E_4=\left[\begin{smallmatrix}0&t&u&v\\ \noalign{\medskip}0&0&p&q\\
\noalign{\medskip}0&0&0&r\\ \noalign{\medskip}0&0&0&0\end
{smallmatrix}
 \right],
E_5=\left[ \begin{smallmatrix}a&b&c&d\\ \noalign{\medskip}0&e&f&g\\
\noalign{\medskip}0&0&h&i\\ \noalign{\medskip}0&0&0&j\end
{smallmatrix}
 \right].$$
\noindent and define $E_3=[E_4,E_5]-E_4, E_2=[E_3,E_4],
E_1=[E_2,E_4]$. Then

 $$E_1= \left[ \begin {smallmatrix} 0&0&0&tpr \left( 3\,e-a-3\,h+j \right)
\\ \noalign{\medskip}0&0&0&0\\ \noalign{\medskip}0&0&0&0
\\ \noalign{\medskip}0&0&0&0\end {smallmatrix} \right],
 E_2=\left[ \begin {smallmatrix} 0&0&tp \left( 2\,e-a-h \right) &2\,tqe-
tqa+2\,rtf+2\,ruh-rau-rbp-tpi-tqj-ruj\\ \noalign{\medskip}0&0&0&pr
 \left( 2\,h-e-j \right) \\ \noalign{\medskip}0&0&0&0
\\ \noalign{\medskip}0&0&0&0\end {smallmatrix} \right],$$
$$E_3= \left[ \begin {smallmatrix} 0&t \left( e-a-1 \right) &tf+uh-au-bp-u
&tg+ui+vj-av-bq-cr-v\\ \noalign{\medskip}0&0&p \left( h-e-1 \right)
&pi+qj-eq-fr-q\\ \noalign{\medskip}0&0&0&r \left( j-h-1 \right)
\\ \noalign{\medskip}0&0&0&0\end {smallmatrix} \right].$$
Of the ten brackets, three, $[E_2,E_4]=E_1, [E_3,E_4]=E_1, [E_4,E_5]=E_3+E_4$, are satisfied by construction. Furthermore
$[E_1,E_2]=0,[E_1,E_3]=0,[E_1,E_4]=0$ identically. Of the remaining four brackets we shall need only the following:
$$[E_1,E_5]-3E_1=\left[ \begin {smallmatrix} 0&0&0&tpr \left( 3\,e-a-3\,h+j \right)(j-a-3)
\\ \noalign{\medskip}0&0&0&0\\ \noalign{\medskip}0&0&0&0
\\ \noalign{\medskip}0&0&0&0\end {smallmatrix} \right],
[E_3,E_5]-E_3= \left[ \begin {smallmatrix} 0&t(e-a-1)^2&*&*
\\ \noalign{\medskip}0&0&p(h-e-1)^2&*\\ \noalign{\medskip}0&0&0&r(j-h-1)^2
\\ \noalign{\medskip}0&0&0&0\end {smallmatrix} \right].$$
Considering $E_1$, we see that $prt\neq0$. Then looking at $[E_3,E_5]-E_3=0$ we see that we must have $e=a+1,h=e+1,j=h+1$ and hence
$3e-a-3h+j=0$ and hence $E_1=0$ which is a contradiction.
\end{proof}


\begin{proposition} Algebra $A_{5,38}$ has no representation in
$\g\l(4,\R)$ \end{proposition}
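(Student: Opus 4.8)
The plan is to argue exactly as in the preceding propositions for $A_{5,21}$ and $A_{5,31}$. First I would note that every adjoint matrix of $A_{5,38}$ has only real eigenvalues, so by the discussion at the start of Section 4 a representation of type (i)--(iv) is impossible and any putative faithful representation in $\g\l(4,\R)$ must be of the upper-triangular type (v) supplied by the real form of Lie's Theorem. Since $e_1=[e_1,e_4]$, $e_2=[e_2,e_5]$ and $e_3=[e_4,e_5]$ all lie in the derived algebra, the matrices $E_1,E_2,E_3$ are then \emph{strictly} upper triangular, while $E_4,E_5$ may be taken upper triangular. Moreover $e_3$ is central, so $E_3$ commutes with every generator; in particular $E_3=[E_4,E_5]$ is a nonzero strictly upper triangular (hence nilpotent) matrix commuting with both $E_4$ and $E_5$.

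Next I would extract the diagonal (weight) data. Writing the diagonals of $E_4,E_5$ as $(a_1,a_2,a_3,a_4)$ and $(b_1,b_2,b_3,b_4)$, the adjoint action of the diagonal parts preserves the grading of the strictly upper triangular matrices by super-diagonal level, whereas the off-diagonal parts strictly raise that level. Comparing lowest-level components then turns each defining bracket into a weight condition: $[E_1,E_4]=E_1$ and $[E_1,E_5]=0$ force a pair $i<j$ with $a_i-a_j=-1$ and $b_i=b_j$; symmetrically $[E_2,E_4]=0$ and $[E_2,E_5]=E_2$ force a pair $k<l$ with $a_k=a_l$ and $b_k-b_l=-1$; and $[E_3,E_4]=[E_3,E_5]=0$ with $E_3\neq0$ forces a pair $m<n$ with $a_m=a_n$ and $b_m=b_n$, necessarily at super-diagonal level at least two (the level-one part of $[E_4,E_5]$ at a weight-zero position vanishes identically). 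This reduces the problem to a short list of admissible diagonal configurations, which Corollary 3.2 trims further.

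For each surviving configuration I would write $E_4,E_5$ explicitly, compute $E_3=[E_4,E_5]$, and impose the centrality equations $[E_3,E_4]=[E_3,E_5]=0$ simultaneously with the eigenvector equations for $E_1,E_2$ and the nilradical relations $[E_1,E_2]=[E_1,E_3]=[E_2,E_3]=0$. The expected outcome, just as for $A_{5,21}$ and $A_{5,31}$, is that in every case these constraints collapse: either they force $E_3=[E_4,E_5]=0$, contradicting $[e_4,e_5]=e_3\neq0$ and faithfulness, or they make $E_1,E_2,E_3$ linearly dependent, again contradicting faithfulness. The main obstacle is precisely this bookkeeping, and its genuine content is that the single nonzero central commutator $E_3=[E_4,E_5]$ cannot be simultaneously central, nilpotent, and independent of the two one-parameter eigenvectors $E_1,E_2$ inside only four dimensions; isolating the entry-level equations that produce the degeneracy is where the work lies. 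Ruling out all cases shows there is no representation in $\g\l(4,\R)$, and hence $\mu(A_{5,38})=5$.
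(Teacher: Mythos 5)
Your setup is sound and agrees with the paper's: since all adjoint matrices of $A_{5,38}$ have real eigenvalues, only the fully triangular case (v) survives; $E_1,E_2,E_3$ are strictly upper triangular because $e_1,e_2,e_3$ lie in the derived algebra; and your observation that the first super-diagonal of $E_3=[E_4,E_5]$ must vanish is exactly the paper's opening move (it deduces that the $(1,2)$, $(2,3)$ and $(3,4)$ entries of $E_3$ are zero). Your organization of the subsequent case split is different from the paper's — you propose to enumerate diagonal weight configurations of $E_4,E_5$, whereas the paper conjugates $E_2$ into one of seven normal forms (the same list used for $A_{5,22}$) and grinds through each.

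The genuine gap is that the proof stops exactly where the content begins. The weight conditions you extract do not by themselves produce a contradiction: for instance the diagonals $a=(0,0,1,0)$ and $b=(0,1,1,0)$ satisfy all three of your constraints simultaneously, with the pair $(2,3)$ carrying $E_1$, the pair $(1,2)$ carrying $E_2$, and the level-two pair $(1,4)$ available for $E_3$. So the assertion that ``in every case these constraints collapse'' is precisely the proposition to be proved, and you offer it only as an ``expected outcome.'' The paper's proof consists almost entirely of this verification: in each of the seven cases it solves the entry-level equations coming from $[E_2,E_5]=E_2$, $[E_1,E_2]=0$, $[E_1,E_5]=0$, $[E_2,E_4]=0$, $[E_3,E_4]=[E_3,E_5]=0$ and $[E_4,E_5]=E_3$, and shows explicitly that $E_1=0$, $E_3=0$, or two generators become proportional. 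Until you carry out the analogous elimination for each of your surviving diagonal configurations (including tracking the higher-level entries of $E_1,E_2,E_3$, which your lowest-level weight argument does not constrain), you have a plausible strategy but not a proof.
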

\begin{proof} Define $\{E_1,E_2,E_4,E_5\}$ as
\[ \begin{array}{llll} & E_1=\left[\begin{smallmatrix} 0&\alpha&\beta&\delta\\ \noalign{\medskip}0&0&\epsilon&\phi
\\ \noalign{\medskip}0&0&0&\psi\\ \noalign{\medskip}0&0&0&0
 \end{smallmatrix}\right],\,
E_2=\left[\begin{smallmatrix} 0&\rho&\sigma&\tau\\ \noalign{\medskip}0&0&\lambda&\mu
\\ \noalign{\medskip}0&0&0&\theta\\0&0&0&0\end{smallmatrix}\right] ,\, E_4=\left[\begin{smallmatrix} a&b&c&d
\\ \noalign{\medskip}0&e&f&g\\ \noalign{\medskip}0&0&
h&i\\ \noalign{\medskip}0&0&0&j
\end{smallmatrix}\right],
E_5=\left[\begin{smallmatrix} m&n&p&q\\
0&r&s&u\\ 0&0&t&v\\ 0&0&0&w\end{smallmatrix}\right].
\end{array}\]
Then define $E_3=[E_4,E_5]$ and we must have that $[E_1,E_4]=E_1, [E_2,E_5]=E_2$ and the remaining seven commutators are zero.
Consulting the $(3,4)$ entries of $[E_3,E_4]$ and $[E_3,E_5]$ we deduce that the $(3,4)$th entry of $E_3$ vanishes. Likewise the $(1,2)$ and  $(2,3)$ entries of $E_3$ vanish. 

Just as in $A_{5,22}$ there are seven normal forms for $E_2$ under change of basis. We work through each of these cases again suppressing the details of the very complicated matrices concerned. In the first case $\rho=1, \lambda=1, \theta=1, \sigma=0, \tau=0, \mu=0$
From $[E_2,E_5]$ we must have that $r=m+1, s=n, u=p, t=r+1, v=s, w=t+1$ and from $[E_1,E_2]$ that
$\epsilon=\alpha, \phi=\beta, \psi=\epsilon$. From $[E_1,E_5]$ we have that $\alpha=0, \beta=0, \delta=0$ but now $E_1=0$.

In the second case (ii) $\rho=1, \lambda=1, \theta=0, \sigma=0, \tau=0, \mu=0$. Then $[E_2,E_5]=0$ implies that
$r=m+1, s=n, u=0, v=0, t=r+1$ and $[E_1,E_2]=0$ that $\epsilon=\alpha, \phi=0, \psi=0$. Now $[E_1,E_5]=0$ easily gives
$\alpha=0, \beta=0$ and $[E_2,E_4]=0$ that $e=a, f=b, g=0, i=0, h=e$. Finally the $(1,3)$th entry of $[E_3,E_5]$ implies that $c=0$ and then $E_1$ and $E_3$ are proportional.

In the third case  $\rho=1, \lambda=0, \theta=1, \sigma=0, \tau=0, \mu=0$. Then $[E_2,E_5]=0$ implies that
$r=m+1, s=0, u=p, w=t+1$, from $[E_1,E_2]=0$ that $\epsilon=0, \phi=\beta$ and from $[E_1,E_5]=0$ that $\alpha=0, \psi=0$. Next from $[E_2,E_4]=0$ we find that
$e=a, f=0, g=c, j=h$ and from $[E_4,E_5]=E_3$ that
$b=0, i=0$. Now the $(1,4)$th entry of $[E_2,E_3]=0$ implies that
$ap+ct-mc-ph=0$. In order for $E_3$ not to vanish we must have that $cv+d-nc\neq0$ and then from the the $(1,4)$th entries of $[E_3,E_4]$ and $[E_3,E_5]$
$h=a, t=m-1$. Finally $[E_4,E_5]=E_3$ implies that $c=0, d=0$ and hence $E_3=0$.

In the fourth case $\rho=0, \lambda=0, \theta=0, \sigma=1, \tau=0, \mu=1$
From $[E_2,E_5]=0$ we have that $t=m+1, v=n, w=r+1$, from $[E_1,E_2]=0$ that
$\psi=\alpha$ and from $[E_2,E_4]=0$ that $h=a, i=b, j=e$. Now from $[E_3,E_5]=0$ we find that
$eu+nf+gm+g-rg-bs-ua=0$ which can be used to simplify $E_3$. The $(1,3)$th entry of $[E_3,E_5]=0$ implies that
$c=nf-bs$. Now $bu-nbs+n^2f+d-ng-pb\neq0$ or else $E_3=0$ and then from the the $(1,4)$ entries of $[E_3,E_4]$ and  $[E_3,E_5]$ we deduce that
$e=a$ and $r=m-1$. Now from $[E_4,E_5]=E_3$ we have that $b=0, d=0, f=0, g=0$ which implies that $E_3=0$.

In the fifth case $\rho=0, \lambda=0, \theta=0, \sigma=1, \tau=0, \mu=0$. From $[E_2,E_5]=0$ we have that $t=m+1, v=0$, from $[E_1,E_2]=0$ that
$\psi=0$ and from $[E_2,E_4]=0$ that $h=a, i=0$. Now the $(1,3)$th entries of $[E_3,E_5]=0$ and $[E_1,E_5]=0$ implies that $c=nf-bs$ and
$\beta=n\epsilon-\alpha s$. Now we separate cases according as the $(2,4)$th entry $eu+fv+gm+g-rg-si-ua$ of $E_3$ vanishes or not. In the first of these subcases the $(1,4)$th entries of $[E_3,E_4]=0, [E_3,E_5]=0, [E_4,E_5]=E_3$ and $[E_1,E_4]=0$ imply that $j=a, w=m, d=0$ and $\delta=g\alpha-b\phi$. Now considering
$[E_1,E_4]=0$ we must have $e=a+1$ or $e=a-1$ or else $E_1=0$. However, now in either if these cases $[E_4,E_5]=E_3$ implies that $E_3=0$. Thus we may assume that $eu+fv+gm+g-rg-si-ua\neq0$ and then the $(1,4)$th entries of $[E_1,E_3]=0$ and $[E_3,E_5]=0$ imply that $\alpha=0$ and $n=0$. Then the $(1,2)$ and $(2,4)$ entries of $[E_3,E_4]=0$ give $b=0$ and $e=a$ and finally the $(2,4)$th entry of $[E_3,E_5]=0$ that $r=m$ and at this point we have that $[E_4,E_5]=E_3$ implies that  $E_3=0$.

In the sixth case $\rho=0, \lambda=1, \theta=0, \sigma=0, \tau=0, \mu=0$. From $[E_2,E_5]=0$ we have that $t=r+1, n=0, v=0$, from $[E_1,E_2]=0$ that
$\alpha=0, \psi=0$ and from $[E_2,E_4]=0$ that $b=0, h=e, i=0$. Now $[E_4,E_5]=E_3$ imply that $f=0$ and the $(2,3)$th entry of $[E_1,E_4]=E_1$ that $\epsilon=0$. Now the $(1,3)$th entries of $[E_3,E_4]=0$ and $[E_3,E_5]=0$ imply that the $(1,3)$th entry $ap+cr+c-mc-pe=0$ of $E_3$ vanishes. Suppose that $d\neq0$. Then the $(1,4)$ entries of $[E_3,E_4]=0, [E_3,E_5]=0$ and $[E_4,E_5]=E_3$ imply that $j=a, w=m, d=0$. Hence $d=0$. Now comparing the $(2,4)$th entries of $[E_3,E_4]=0$ and $E_3$ we find that $j=e$ and from the $(2,4)$th entry of $[E_3,E_5]=0$ that $w=r$. Now, however, the $(2,4)$th entry of $[E_4,E_5]=E_3$ implies that $E_3=0$.

In the seventh case $\rho=0, \lambda=0, \theta=0, \sigma=0, \tau=1, \mu=0$., consulting the $(3,4)$ entries of $[E_3,E_4]$ and $[E_3,E_5]$ we deduce that the $(3,4)$th entry of $[E_3,E_4]$ vanish. Likewise the $(1,2)$ and  $(2,3)$ entries of $[E_3,E_4]$ vanish. 
Now we cannot have that both the $(1,3)$ and $(2,4)$ entries of $E_3$ vanish or else $E_2$ and $E_3$ would be proportional. Appealing to Corollary 3.2 we may assume that the $(1,3)$th entry of $E_3$ is not zero. Hence from the $(1,2)$ entries of $[E_3,E_4]$ and $[E_3,E_5]$ we deduce that $h=a$ and $t=m$. Now the $(1,2)$ entries of $[E_1,E_5]$ and $[E_4,E_5]$ imply that $\psi=0$ and $i=0$. Next the $(1,3)$  and $(1,4)$ entries of $[E_1,E_4]$ give that $\beta=f\alpha -b\epsilon$ and $\delta=g\alpha+fi\alpha-bi\epsilon-b\phi$ and the $(1,4)$th entry of $[E_3,E_5]$ implies that $d=-(vbs-2vnf+bu+cv-ngm-2ng-pi-neu+nrg+nsi+nua)$.
Consulting the $(1,2)$ and $(2,3)$ entries of $[E_1,E_4]=E_1$ we see that at least one of $\alpha$ and $\epsilon$ must be zero; hence from the $(1,2)$ and $(2,3)$ entries of $[E_1,E_5]$ we deduce that $r=m$. Next the $(2,4)$ entries of $[E_3,E_5]$ and $[E_1,E_5]$ imply that $g=-(eu+fv-si-ua)$ and $\phi=-v\epsilon$.

Now we separate cases assuming first of all that $\alpha=0$. Then $\epsilon\neq0$ or else $E_1=0$. Now the $(1,3)$ entry of $[E_1,E_5]$ implies that $n=0$
and the $(2,3)$ entry of $[E_1,E_4]-E_1$ and $[E_4,E_5]-E_3$ implies that $s=0$. Now, however, $E_3=0$, a contradiction.

Finally suppose that $\epsilon=0$ and $\alpha\neq0$.  Then the $(1,2)$ entry of $[E_1,E_4]-E_1$ gives that $e=a+1$. The $(1,2)$ and $(2,3)$ entries of $[E_4,E_5]-E_3$ give $n=s=0$ Now, however, again $E_3=0$.
\end{proof}

Since $A_{5,39}$  is equivalent over $\C$ to $A_{5,38}$ we deduce that
\begin{corollary} $A_{5,39}$ has no representation in $\g\l(4,\R)$. \end{corollary}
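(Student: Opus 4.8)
The plan is to obtain the Corollary from the preceding Proposition by complexification, as the surrounding text anticipates. First I would suppose, for contradiction, that $A_{5,39}$ admits a faithful representation $\rho$ in $\g\l(4,\R)$. Regarding $\g\l(4,\R)\subseteq\g\l(4,\C)$ and complexifying the representing module, one obtains a representation of the complexified algebra $(A_{5,39})_\C$ in $\g\l(4,\C)$; faithfulness is preserved, since the complexification of an injective bracket-preserving $\R$-linear map is again injective.

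Next I would transport this representation along the complex isomorphism $(A_{5,39})_\C\cong(A_{5,38})_\C$. Concretely, setting $\ba{e_1}=e_1+\sqrt{-1}\,e_2$, $\ba{e_2}=e_1-\sqrt{-1}\,e_2$, $\ba{e_3}=\frac{\sqrt{-1}}{2}\,e_3$, $\ba{e_4}=\frac12(e_4-\sqrt{-1}\,e_5)$ and $\ba{e_5}=\frac12(e_4+\sqrt{-1}\,e_5)$ transforms the brackets of $A_{5,39}$ into $[\ba{e_1},\ba{e_4}]=\ba{e_1}$, $[\ba{e_2},\ba{e_5}]=\ba{e_2}$, $[\ba{e_4},\ba{e_5}]=\ba{e_3}$, which are exactly those of $A_{5,38}$; this is a direct computation using $[e_1,e_5]=-e_2$ and $[e_2,e_5]=e_1$. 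Hence $(A_{5,38})_\C$ is faithfully represented in $\g\l(4,\C)$. Since $(A_{5,38})_\C$ is solvable, the ordinary Lie Theorem over $\C$ applies and lets me put all five representing matrices simultaneously into upper-triangular form; note that over $\C$ no $2\times2$ blocks of the kind in \eqref{Lie} are needed, so a single upper-triangular case is exhaustive.

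At this point I would invoke the argument proving the preceding Proposition. That argument is purely algebraic: it fixes upper-triangular unknowns $E_1,E_2,E_4,E_5$, sets $E_3=[E_4,E_5]$, and over the seven normal forms for $E_2$ derives in each case a contradiction — some $E_i$ vanishing or two generators becoming proportional — by solving the structure-constant equations of $A_{5,38}$ through a finite chain of substitutions and factorizations. None of these steps uses the reality of the entries, and the one genuinely geometric device employed, the anti-diagonal transposition of Corollary 3.2, is defined by the field-independent formula $M\mapsto -LM^tL$ and so is equally valid over $\C$. Running the identical case analysis with complex entries therefore produces the same contradiction, and the Corollary follows.

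The main obstacle is precisely this verification of field-independence. The subtlety worth emphasizing is that the naive implication — a \emph{real} representation of $A_{5,39}$ yields only a \emph{complex} representation of $A_{5,38}$ — is harmless exactly because the preceding Proposition in fact excludes upper-triangular representations over $\C$, not merely over $\R$. Concretely I must check that the seven normal forms for the nilpotent generator are attainable over $\C$ (they are, being produced by conjugation with upper-triangular matrices, an operation available over any field) and that no step tacitly relied on the ordering or positivity of real numbers. Granting that, the reduction is complete.
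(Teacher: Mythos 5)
Your proposal is correct and follows essentially the same route as the paper: the paper's own proof is the one-line observation that $A_{5,39}$ is equivalent over $\C$ to $A_{5,38}$, and you have simply made explicit the complexification of the putative real representation, an explicit change of basis realizing that equivalence, and the (valid) observation that the case analysis excluding $A_{5,38}$ is field-independent and therefore rules out upper-triangular representations over $\C$ as well. Nothing further is needed.
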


\end{document}